\def\d{\delta}
\def\r{\rho}
\def\e{\epsilon}
\def\om{\omega}
\def\a{\alpha}
\def\s{\sigma}
\def\gm{\gamma}
\def\Gm{\Gamma}
\def\A{\mathbf{A}}
\def\RR{\mathbb{R}}
\newcommand{\edges}{E}
\newcommand{\fail}{\mathcal F}
\newcommand{\support}{\mathcal S}
\newcommand{\suppest}{\widetilde{\mathcal S}}
\newcommand{\obs}{\mathbf y}
\newcommand{\x}{\mathbf x}
\newcommand{\z}{\mathbf z}
\newcommand{\sol}{\widehat{\x}}
\newcommand{\w}{\boldsymbol{\omega}}
\newcommand{\dimensions}{n \times N}
\newcommand{\numsam}{n}
\newcommand{\noise}{\mathbf e}
\newcommand{\bigO}{\mathcal O}
\newtheorem{theorem}{Theorem}[section]
\newtheorem{remark}{Remark}[section]
\newtheorem{definition}{Definition}[section]
\newtheorem{lemma}{Lemma}[section]
\begin{document}

\title{Sparse matrices for weighted sparse recovery}

\author{
Bubacarr Bah 
\thanks{
Mathematics Department, University of Texas at Austin ({\tt bah@math.utexas.edu}).
}
}

\maketitle

\begin{abstract}
We derived the first sparse recovery guarantees for weighted $\ell_1$ minimization with sparse random matrices and the class of weighted sparse signals, using a weighted versions of the null space property to derive these guarantees. These sparse matrices from expender graphs can be applied very fast and have other better computational complexities than their dense counterparts. In addition we show that, using such sparse matrices, weighted sparse recovery with weighted $\ell_1$ minimization leads to sample complexities that are linear in the weighted sparsity of the signal and these sampling rates can be smaller than those of standard sparse recovery. Moreover, these results reduce to known results in standard sparse recovery and sparse recovery with prior information and the results are supported by numerical experiments.
\end{abstract}

\paragraph{Keywords:}{Compressed sensing, sparse recovery, weighted $\ell_1$-minimization, weighted sparsity, sparse matrix, expander graph, sample complexity, nullspace property}

\section{Introduction} \label{sec:intro}

Weighted sparse recovery was introduced by \cite{rauhut2015interpolation} for application to function interpolation but there has been a steady growth in interest in the area, \cite{peng2014weighted,adcock2015infinite, adcock2015infinite2,bouchot2015compressed,bah2015sample,flinth2015optimal}, since then. In particular \cite{bah2015sample} showed its equivalence to standard sparse recovery using weighted $\ell_1$ minimization. The setting considered here is as follows: for a signal of interest $\x\in\RR^N$ which is $k$-sparse and a given measurement matrix $\A\in\RR^{\dimensions}$, we perform measurements $\obs = \A\x + \noise$ for a noise vector $\noise$ with $\|\noise\|_1 \leq \eta$ (this being the main difference with the previously mentioned works).  The weighted $\ell_1$-minimization problem is then given by:
\begin{equation}
\label{eqn:wl1min}\tag{WL1}
\min_{\z\in\RR^N} ~\|\z\|_{\om,1} \quad \mbox{subject to} \quad \|\A\z - \obs\|_1 \leq \eta,
\end{equation}
where $\|\z\|_{\om,1} = \sum_{i=1}^N \om_i|z_i|$ with weights $\om_i > 0$. If, based on our prior knowledge of the signal, we assign an appropriate weight vector $\w\geq 1$, then any $\support\subset [N]$ such that $\sum_{i\in \support} \om_i \leq s$ is said to be {\em weighted $s$-sparse}. When the weighted $\ell_1$ minimization \eqref{eqn:wl1min} is used to recover signals supported on sets that are weighted $s$-sparse, then the problem becomes a weighted sparse recovery problem.

The motivation for the use of weighted $\ell_1$ minimization either for standard sparse recovery or weighted sparse recovery stems from the numerous applications in which one naturally is faced with a sparse recovery problem and a prior distribution over the support. Consequently, this has been the subject of several works in the area of compressive sensing, see for example \cite{von2007compressed,candes2008enhancing,khajehnejad2009weighted,vaswani2010modified,jacques2010short,xu2010breaking,friedlander2012recovering,oymak2012recovery,rauhut2015interpolation,mansour2014recovery,peng2014weighted,adcock2015infinite,bouchot2015compressed,bah2015sample}. Most of these works proposed various strategies for setting weights (either from two fixed levels  as in \cite{khajehnejad2009weighted,mansour2011weighted}, or from a continuous interval as in  \cite{candes2008enhancing,rauhut2015interpolation}), in which smaller weights are assigned to those indices which are deemed ``more likely" to belong to the true underlying support. 

All of the prior work concerning weighted $\ell_1$ minimization considered dense matrices, either subgaussian sensing matrices or structured random matrices arising from orthonormal systems. Here, we instead study the weighted $\ell_1$ minimization problem using sparse binary matrices for $\A$. In the unweighted setting, binary matrices are known to possess what is referred to as the {\em square-root bottleneck}, that is they require $m= \Omega\left(k^2\right)$ rows instead of the {\em optimal} $\bigO\left(k\log\left(N/k\right)\right)$ rows to be ``good'' compressed sensing matrices with respect to optimal recovery guarantees in the $\ell_2$ norm, see \cite{devore2007deterministic,chandar2008negative}.  Yet, in \cite{berinde2008combining}, the authors show that such sparse matrices achieve optimal sample complexity ({\em optimally} few rows of  $\bigO\left(k\log\left(N/k\right)\right)$) if one instead considers error guarantees in the $\ell_1$ norm.  This paper aims to develop comparable results for sparse binary matrices in the setting of weighted $\ell_1$ minimization.
On the contrary to being second best in terms of theoretical guarantees to their dense counterparts, sparse binary matrices have superior computational properties. Their application, storage and generation complexities are much smaller than dense matrices, see Table \ref{tab:sampcomp}.
\begin{table}[h!]
\centering 
\begin{tabular}{|l|c|c|c|c|}
	\hline
 	 \small{\bf Ensemble} & \small{\bf Storage} & \small{\bf Generation} & \small{\bf Application} ($\A^*\obs$) & \small{\bf Sampling rate}\\
 	\hline 
	&  & &  & \\
    	 Gaussian \cite{candes2006stable} &  $\bigO(nN)$ & $\bigO(nN)$ &  $\bigO(nN)$ & $\bigO(k\log(N/k))$  \\
	&  & &  & \\
	Partial Fourier \cite{haviv2015restricted} & $\bigO(n)$ & $\bigO(n)$ &  $\bigO(N\log N)$ & $\bigO(k\log^2(k)\log(N))$  \\
	&  & &  & \\
	Expander \cite{berinde2008combining} & $\bigO(dN)$ & $\bigO(dN)$ &  $\bigO(dN)$ & $\bigO(k\log(N/k))$  \\
	&  & &  & \\
	\hline
\end{tabular}
\label{tab:sampcomp}
\caption{\small Computational complexities of matrix ensembles. The number of nonzeros per column of $\A$ is denoted by $d$, which is typically $\bigO(\log N)$. The references point to papers that proposed the sampling rate. The table is a slight modification of Table I in \cite{mendozaexpander}.}
\end{table} \vspace{-1mm}
Moreover, these non-mean zero binary matrices are more natural to use for some applications of compressed sensing than the dense mean zero subgaussian matrices, for example the single pixel camera, \cite{duarte2008single}, uses measurement devices with binary sensors that inherently correspond to binary and sparse inner products. 

The paper \cite{rauhut2015interpolation} introduced a general set of conditions on a sampling matrix and underlying signal class  which they use to provide recovery guarantees for weighted $\ell_1$ minimization, namely, the concepts of weighted sparsity, weighted null space property, and weighted restricted isometry property.  These are generalizations of the by-now classical concepts of sparsity, null space property, and restricted isometry property introduced in \cite{candes2006stable} for studying unweighted $\ell_1$ minimization.  The paper  \cite{rauhut2015interpolation}  focused on applying these tools to matrices arising from bounded orthonormal systems, and touched briefly on implications for dense random matrices.   Here, we show that, under appropriate  modifications, the same tools can provide weighted $\ell_1$ minimization guarantees for sparse binary sensing matrices that are adjacency matrices of expander graphs.

\paragraph{\bf Contributions:}
The contributions of this work are as follows.
\begin{enumerate}
 \item The introduction of the weighted robust null space property, satisfied by adjacency matrices of $(k,d,\e)$-lossless expander graphs, see Definition \ref{def:rnsp1} in Section \ref{sec:guarantees}.
 \item The characterization of weighted sparse recovery guarantees for \eqref{eqn:wl1min} using these matrices, see Theorem \ref{thm:errorbnd}, in Section \ref{sec:guarantees}.  
 \item The derivation of sampling rates that are linear in the weighted sparsity of the signals using such matrices, see Theorem \ref{thm:sampcomp_new} in Section \ref{sec:sample}. These sampling bounds recover known bounds for unweighted sparse recovery and sparse recovery with partially known support, see Section \ref{sec:disc}.
 \end{enumerate}
Numerical experiments support the theoretical results, see Section \ref{sec:numerics}.

\section{Preliminaries} \label{sec:prelim}

\subsection{Notation \& definitions}
Scalars will be denoted by lowercase letters (e.g. $k$), vectors by lowercase boldface letters (e.g., ${\bf x}$), sets by uppercase calligraphic letters (e.g., $\mathcal{S}$) and matrices by uppercase boldface letters (e.g. ${\bf A}$).
The cardinality of a set $\mathcal{S}$ is denoted by $|\mathcal{S}|$ and $[N] := \{1, \ldots, N\}$.
Given $\mathcal{S} \subseteq [N]$, its complement is denoted by $\mathcal{S}^c := [N] \setminus \mathcal{S}$ and $\x_\mathcal{S}$ is the restriction of $\x \in \RR^N$ to $\mathcal{S}$, i.e.~ $(\x_\mathcal{S})_i = \x_i$ if $i \in \mathcal{S}$ and $0$ otherwise. $\Gamma(\support)$ denotes the set of {\em neighbors} of $\support$, that is the right nodes that are connected to the left nodes in $\support$ in a bipartite graph, and $e_{ij} = (x_i,y_j)$ represents an edge connecting node $x_i$ to node $y_j$.
The $\ell_p$ norm of a vector ${\bf x} \in \RR^N$ is defined as $\|{\bf x}\|_p := \left ( \sum_{i=1}^N x_i^p \right )^{1/p}$, while
the weighted $\ell_p$ norm is $\|\x\|_{\om,p}:= \left(\sum_{i=1}^N \om_i^{2-p}|x_i|^p \right)^{1/p}$. This work focuses on the case where $p=1$, i.e $\|\x\|_{\om,1}:= \sum_{i=1}^N \om_i|x_i|$.

A $(k,d,\e)$-lossless expander graph, also called an {\em unbalanced} expander graph \cite{berinde2008combining}, is maximally  ``well-connected" given a fixed number of edges.  More precisely, it is defined as follows:

\begin{definition}[Lossless Expander]
\label{def:llexpander}
Let $G=\left( [N],[n],\edges \right)$ be a left-regular bipartite graph with $N$ left (variable) nodes, $m$ right (check) nodes, a set of edges $\edges$ and left degree $d$.
If, for any $\epsilon \in (0,1/2)$ and any $\support \subset [N]$ of size $|\support|\leq k$, we have that $|\Gamma(\support)| \geq (1-\epsilon)d|\support|$, then $G$ is referred to as a {\em $(k,d,\epsilon)$-lossless expander graph}.
\end{definition}

\subsection{Weighted sparsity}
As a model signal class for weighted $\ell_1$ minimization, we consider the weighted $\ell_p$ spaces considered in \cite{rauhut2015interpolation}. Given a vector of interest $\x\in\RR^N$ and a vector of weights $\w\in\RR^N\geq 1$, i.e. $\om_i \geq 1$ for $i\in [N]$, the weighted $\ell_p$ space is defined as 
\begin{equation}
 \label{eqn:wlpspace}
  \ell_{\om,p} := \left\{ \x\in\RR^N: ~\|\x\|_{\om,p} <\infty\right\}, \quad 0<p\leq 2.
 \end{equation}
 
 The weighted $\ell_0$-``norm'' is denoted as $\|\cdot\|_{\om,0}$; while the weighted cardinality of a set $\support$ is denoted as $\om(\support)$ and both are respectively defined as
 \begin{equation}
 \label{eqn:l0norm}
  \|\x\|_{\om,0} := \sum_{\{i:x_i\neq0\}} \om_i^2, \quad \mbox{and} \quad
  \om(\support) := \sum_{i\in\support} \om_i^2.
 \end{equation}
 Observe that the weighted cardinality upper bounds the cardinality of a set, i.e. $\om(\support) \geq |\support|$ since $\om_i\geq 1$. We denote the weighted $s$-term approximation of a vector $\x$ for $s\geq 1$ by $\s_s(\x)_{\om,p}$ and define it as follows:
  \begin{equation}
 \label{eqn:stermapprox}
  \s_s(\x)_{\om,p} := \inf_{\z:\|\z\|_{\om,0}\leq s} \|\z - \x\|_{\om,p}.
 \end{equation}
Up to a small multiplicative constant, this quantity can be computed efficiently by sorting a weighted version of the signal $\x$ and truncating, see \cite{rauhut2015interpolation} for more details.   For this work, we focus attention to the case $p=1$:
 \begin{equation}
 \label{eqn:stermapprox1}
  \s_s(\x)_{\om,1} := \|\x - \x_{\support}\|_{\om,1} = \|\x_{\support^c}\|_{\om,1}.
 \end{equation}
\vspace{-0.6cm}

\section{Theoretical results} \label{sec:mresults}

The main results of this work give recovery guarantees for weighted $\ell_1$ minimization \eqref{eqn:wl1min} when the sampling operators are adjacency matrices of lossless expander graphs for the class of weighted sparse signals.  We characterize the appropriate weighted robust null space property and expansion condition that the adjacency matrix of a $(k,d,\e)$-lossless expander graph needs to satisfy to guarantee robust weighted sparse recovery. These results reduce to the standard sparsity and unweighted $\ell_1$ minimization results when the weights are all chosen to be equal to one. We derive  sample complexities, in terms of the weighted sparsity $s$, of weighted sparse recovery using weighted $\ell_1$ minimization compared to unweighted $\ell_1$ minimization with adjacency matrices of a $(k,d,\e)$-lossless expander graphs. These sample complexities are linear in $\om(\support)$ and reduce to known results of standard sparse recovery and sparse recovery with prior information.

\pagebreak
\subsection{Robust weighted sparse recovery guarantees}\label{sec:guarantees}
The {\em weighted null space property} ($\w$-NSP) has been used to give sparse recovery guarantees \cite{khajehnejad2009weighted,mansour2011weighted,rauhut2015interpolation,mansour2014recovery} with two schemes for choice of weights. In \cite{khajehnejad2009weighted,rauhut2015interpolation} the weights $\w\geq 1$; whilst in \cite{mansour2011weighted,mansour2014recovery} the weights $\w\leq 1$. Similar to \cite{rauhut2015interpolation}, we consider the {\em weighted robust NSP} ($\w$-RNSP) for the type of matrices we focus on, which is the robust version of the NSP in the weighted case and follows from the unweighted RNSP proposed in \cite{foucart2013mathematical} for such matrices.
\begin{definition}[$\w$-RNSP]
\label{def:rnsp1}
 Given a weight vector $\w$, a matrix $\A\in \RR^{\dimensions}$ is said to have the robust $\w$-RNSP of order $s$ with constants $\r < 1$ and $\tau > 0$, if 
 \vspace{-1mm}\begin{equation}
 \label{eqn:rnsp}
  \|{\bf v}_{\mathcal{S}}\|_{\om,1} \leq \r\|{\bf v}_{\mathcal{S}^c}\|_{\om,1} + \tau\sqrt{s}\|\A{\bf v}\|_{1},
 \end{equation}
  for all ${\bf v}\in\RR^N$ and all $\mathcal{S}\subset [N]$ with $\omega(\mathcal{S}) \leq s$. 
\end{definition}

We will derive conditions under which an expander matrix satisfies the $\w$-RNSP to deduce error guarantees for weighted $\ell_1$ minimization \eqref{eqn:wl1min}. This is formalized in the following theorem.
\begin{theorem}
 \label{thm:wrnsp}
 Let $\A\in\{0,1\}^{\dimensions}$ be the adjacency matrix of a $(k,d,\e)$-lossless expander graph. If $\mathop{\e_{2k}<1/6}$, then $\A$ satisfies the $\w$-RNSP \eqref{eqn:rnsp} with
 \begin{align}
  \label{eqn:wrnspconst}
  \r = \frac{2\e_{2k}}{1-4\e_{2k}}, \quad \mbox{and} \quad
  \tau = \frac{1}{\sqrt{d}(1-4\e_{2k})}.
 \end{align}
\end{theorem}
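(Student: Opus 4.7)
My plan is to adapt the proof of the unweighted $\ell_1$-robust null space property for adjacency matrices of lossless expander graphs (Theorem~13.11 of Foucart--Rauhut) to the weighted setting. The critical technical input is the expander $\ell_1$-restricted isometry property (Berinde--Indyk; Theorem~13.6 of Foucart--Rauhut): for any $\mathbf{u}\in\RR^N$ with $|\mathrm{supp}(\mathbf{u})|\le 2k$,
\[
d(1-2\e_{2k})\|\mathbf{u}\|_1 \;\le\; \|\A\mathbf{u}\|_1 \;\le\; d\|\mathbf{u}\|_1,
\]
a standard consequence of the unique-neighbour count in a $(2k,d,\e_{2k})$-lossless expander.

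Given $\mathbf{v}\in\RR^N$ and $\support$ with $\om(\support)\le s$, I would first sort the entries of $\mathbf{v}_{\support^c}$ by decreasing magnitude and partition $\support^c$ into successive buckets $\support_1,\support_2,\ldots$, each with weighted cardinality $\om(\support_j)\le s$. Because every $\om_i\ge 1$, each bucket has unweighted cardinality at most $s\le k$, so every union $\support\cup\support_j$ has unweighted cardinality at most $2k$, and the expansion hypothesis applies. Applying the RIP-1 lower bound to $\mathbf{v}_{\support\cup\support_1}$, combined with $\|\A\mathbf{v}\|_1\ge\|\A\mathbf{v}_{\support\cup\support_1}\|_1-\|\A\mathbf{v}_{(\support\cup\support_1)^c}\|_1$ and the RIP-1 upper bound on the tail, and then telescoping over the sorted buckets (folding the unweighted tail into $\|\mathbf{v}_{\support^c}\|_{\om,1}$ via $\om_i\ge 1$), yields by exactly the Foucart--Rauhut manipulation an inequality of the form
\[
\|\mathbf{v}_\support\|_1 \;\le\; \frac{2\e_{2k}}{1-4\e_{2k}}\,\|\mathbf{v}_{\support^c}\|_{\om,1} + \frac{1}{d(1-4\e_{2k})}\,\|\A\mathbf{v}\|_1.
\]
The conversion from $\|\mathbf{v}_\support\|_1$ to $\|\mathbf{v}_\support\|_{\om,1}$ on the left uses the Cauchy--Schwarz estimate
\[
\|\mathbf{v}_\support\|_{\om,1}=\sum_{i\in\support}\om_i|v_i|\;\le\;\sqrt{\om(\support)}\,\|\mathbf{v}_\support\|_2\;\le\;\sqrt{s}\,\|\mathbf{v}_\support\|_2,
\]
which exploits the identity $\|\cdot\|_{\om,2}=\|\cdot\|_2$ from~\eqref{eqn:wlpspace} together with $\om(\support)\le s$.

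The main obstacle is that a na\"\i ve application of the Cauchy--Schwarz bound to the finished unweighted inequality would multiply both terms on the right-hand side by $\sqrt{s}$, giving $\r=\sqrt{s}\cdot 2\e_{2k}/(1-4\e_{2k})$, which is strictly weaker than the claimed constant. To land on $\r=2\e_{2k}/(1-4\e_{2k})$, with the $\sqrt{s}$ confined to the product $\tau\sqrt{s}$ in front of $\|\A\mathbf{v}\|_1$, the Cauchy--Schwarz step has to be threaded into the bucket-by-bucket estimate---using $\|\mathbf{v}_T\|_2$ rather than $\|\mathbf{v}_T\|_1$ on the sparse side of the RIP-1, and letting the weighted tail $\|\mathbf{v}_{\support^c}\|_{\om,1}$ enter through the RIP-1 upper bound directly---so that the $\sqrt{s}$ is absorbed against a $\sqrt{d}$ scaling of the measurement term, producing $\tau=1/(\sqrt{d}(1-4\e_{2k}))$ instead of the $1/(d(1-4\e_{2k}))$ coming from the unweighted analysis. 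The asymmetry between the weighted sparsity $\om(\support)=\sum\om_i^2$ and the weighted $\ell_1$ norm $\sum\om_i|v_i|$ is precisely what makes this constant-chasing delicate.
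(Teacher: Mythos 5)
Your high-level strategy is the right one, and you have correctly diagnosed the central difficulty: a na\"\i ve pass from the unweighted expander NSP to the weighted one via Cauchy--Schwarz multiplies \emph{both} right-hand terms by $\sqrt{s}$ and destroys the constant $\r$. But your proposal stops exactly at that point --- ``the Cauchy--Schwarz step has to be threaded into the bucket-by-bucket estimate \ldots so that the $\sqrt{s}$ is absorbed against a $\sqrt{d}$ scaling'' is a description of what a proof would have to achieve, not a proof. The paper resolves this by \emph{not} treating the RIP-1 as a black box: it works at the level of individual edges and unique neighbours, writes for each $j\in\Gamma(\support_0)$ the unique-neighbour contribution $\om_{\gm(j)}|z_{\gm(j)}|$ in terms of $(\A\z)_j$ plus collision and cross terms (equations \eqref{eqn:elementofAz3}--\eqref{eqn:elementofAz5}), and only then applies Cauchy--Schwarz to the measurement term as
$\sum_{j\in\Gamma(\support_0)}\om_{\gm(j)}|(\A\z)_j| \le \bigl(\sum_{j}\om_{\gm(j)}^2\bigr)^{1/2}\|\A\z\|_2 \le \sqrt{d\,\om(\support_0)}\,\|\A\z\|_1 \le \sqrt{ds}\,\|\A\z\|_1$,
which after dividing by $d$ is precisely where $\tau\sqrt{s}=\sqrt{s}/(\sqrt{d}(1-4\e_{2k}))$ comes from while leaving the collision and cross terms (bounded by Lemma \ref{lem:collisionbound} and the inclusion--exclusion estimate $|E(\Gm(\support_0):\support_l)|\le 2\e_{2k}dk$) untouched by any $\sqrt{s}$. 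This weighted-unique-neighbour decomposition is the missing idea; without it your outline does not produce the claimed constants.

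There is also a concrete error in your bucketing. You partition $\support^c$ into buckets of weighted cardinality at most $s$ and claim each then has unweighted cardinality at most $s\le k$. Since $\om_i\ge 1$ forces $\om(\support)\ge|\support|$, a set of cardinality $k$ can only be weighted $s$-sparse when $s\ge k$, so in general $s\ge k$ rather than $s\le k$; your buckets may therefore contain more than $k$ indices and the $(2k,d,\e_{2k})$ expansion hypothesis need not apply to $\support\cup\support_j$. The paper instead partitions into buckets of \emph{unweighted} cardinality $k$ (ordered by $\om_i|z_i|$ decreasing), which is what makes both the expansion bound and the averaging step $\max_{i\in\support_l}\om_i|z_i|\le\frac1k\sum_{i\in\support_{l-1}}\om_i|z_i|$ legitimate. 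This is fixable, but as written the proposal's partition is not the one under which the key estimates hold.
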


\begin{proof}
 Given any $\z\in\RR^N$. Let $\w\in\RR^N$ be an associated weights vector with $\om_i\geq 1$, for $i\in [N]$, and $\support$ with $|\support|\leq k$ be such that $\om(\support)\leq s$. We will prove that if $\A$ is the adjacency matrix of a $(k,d,\e)$-lossless expander, then $\A$ will satisfy the $\w$-RNSP \eqref{eqn:rnsp} with the parameters specified in \eqref{eqn:wrnspconst}. 
 
 Given $\support$ is the index set of the $k$ largest in magnitude entries of $\z$ and let the indexes in $\support^c$ be ordered such that 
 \begin{equation}
  \label{eqn:orderingcondition}
  \om_i |z_i| \geq \om_{i+1} |z_{i+1}| \quad \mbox{for} \quad i\in\support^c\,.
 \end{equation}
  Without lost of generality (w.l.o.g) we assume that the set of variable nodes of the bipartite graph corresponding to the $(k,d,\e)$-lossless expander are ordered accordingly.

 We denote the {\em collision set} of edges of the bipartite graph as $E'$ and define it as thus
 \begin{equation}
  \label{eqn:collisionset}
  E' := \{e_{ij} ~| ~l<i \mbox{ such that } e_{lj}\in E \}\,.
 \end{equation}
We first state and prove the following lemma that will later be used in the proof.
\begin{lemma}
 \label{lem:collisionbound}
 Let $\x\in\RR^N$ be $k$-sparse. Then
 \begin{equation}
  \label{eqn:collisionbound}
  \sum_{e_{ij}\in E'} \om_i |x_i| \leq \e d \|\x\|_{\om,1}\,.
 \end{equation}
\end{lemma}
\begin{proof}
  Define $R_i := \{e_{lj}\in E' ~| ~l\leq i\}$ and $r_i = |R_i|$. Note that $r_0 = r_1 = 0$.
  \begin{align}
   \label{eqn:cbproof1}
   \sum_{e_{ij}\in E'} \om_i |x_i| 	&= \sum_{i = 1}^N \om_i |x_i| \left(r_i - r_{i-1}\right) \\
   \label{eqn:cbproof2}
					&\leq \sum_{i \leq k} \om_i |x_i| \left(\e di - \e d(i-1)\right) \\
  \label{eqn:cbproof3}
					&= \e d \|\x\|_{\om,1}\,.
  \end{align}
 Equation \eqref{eqn:cbproof1} comes from the definition of $R$. The restriction to $k$ indexes in \eqref{eqn:cbproof2} is due to the fact that $\x$ is $k$-sparse and $x_i = 0$ for all $i\in [N]\backslash \mbox{supp}(\x)$. The bound in \eqref{eqn:cbproof2} is due to the expansion property of the $(k,d,\e)$-lossless expander graph, which implies $r_{k'} \leq \e dk'$ for any $k'\leq k$.
\end{proof}

\pagebreak

 Continuing with the main proof, we split the index set of $\z$ into $q+1$ subsets of $[N]$ (i.e. $\support_0, \support_1, \ldots, \support_q$) of equal cardinality $k$ except possibly the last subset, $\support_q$. We also assume that $\om(\support_0) \leq s$. Let $\edges(\support)$ denote the set of edges connecting to nodes in $\support$.
 \begin{align}
 \label{eqn:wrnsp1}
 d\|\z_{\support_0}\|_{\om,1} = d\sum_{i \in\support_0} \om_i |z_i| & = \sum_{e_{ij} \in E(\support_0)} \om_i |z_i| \\
  \label{eqn:wrnsp1b}
 & = \mathop{\sum \om_i |z_i|}_{e_{ij} \in E(\support_0)\backslash E'(\support_0)}  + \sum_{e_{ij} \in E'(\support_0)} \om_i |z_i|\\
  \label{eqn:wrnsp2}
 													& = \sum_{j\in\Gamma(\support_0)} \mathop{\sum_{i\in\support_0} \om_i |z_i|}_{e_{ij} \in E(\support_0)\backslash E'(\support_0)\quad} + \sum_{j\in\Gamma(\support_0)} \mathop{\sum_{i\in \support_0}}_{e_{ij} \in E'(\support_0)} \om_i |z_i|\,.
 \end{align}
 Equation \ref{eqn:wrnsp1} comes from the left $d$-regularity of the $(k,d,\e)$-lossless expander graph; while \eqref{eqn:wrnsp1b} is due to the definition of the collision set \eqref{eqn:collisionset}. Now we define 
\begin{equation}
 \label{eqn:gamma}
 \gm(j) = \{i \in \support_0 ~| ~e_{ij}\in E(\support_0)\backslash E'(\support_0)\}\,.
\end{equation}
Note that $|{\gm(j)}| = 1$ for each $j\in\Gamma(\support_0)$. Using this notation \eqref{eqn:wrnsp2} can be rewritten as
\begin{equation}
 \label{eqn:wrnsp3}
 d\|\z_{\support_0}\|_{\om,1} = \sum_{j\in\Gamma(\support_0)} \om_{\gm(j)} |z_{\gm(j)}| + \sum_{j\in\Gamma(\support_0)} \mathop{\sum_{i\in \support_0}}_{e_{ij} \in E'(\support_0)} \om_i |z_i|\,.
 \end{equation}
 Next we bound the first term on the right hand side of \eqref{eqn:wrnsp3}. The following follows from the fact that $\A$ is the adjacency matrix of a $(k,d,\e)$-lossless expander graph.
 \begin{align}
  (\A\z)_j = \sum_{i \in [N]} a_{ji} z_i = \mathop{\sum_{i\in [N]} z_i}_{e_{ij}\in E\quad} & = \sum_{l\geq 0} \mathop{\sum_{i\in \support_l} z_i}_{e_{ij}\in E\quad} \nonumber\\
  \label{eqn:elementofAz1}
  & = \mathop{\sum_{i\in \support_0} z_i}_{e_{ij}\in E(\support_0)\quad} + \sum_{l\geq 1} \mathop{\sum_{i\in \support_l} z_i}_{e_{ij}\in E\quad} \\
  \label{eqn:elementofAz2}
  & = \mathop{\sum_{i\in \support_0} z_i}_{e_{ij}\in E(\support_0)\backslash E'(\support_0)\quad} + \mathop{\sum_{i\in \support_0} z_i}_{e_{ij}\in E'(\support_0)\quad} + \sum_{l\geq 1} \mathop{\sum_{i\in \support_l} z_i}_{e_{ij}\in E\quad}\,.
 \end{align}
In \eqref{eqn:elementofAz1} we applied the splitting of the index set $[N]$; while \eqref{eqn:elementofAz2} is due to the definition of the collision set \eqref{eqn:collisionset}. With the ordering of the variable nodes we can use \eqref{eqn:gamma}, to rewrite \eqref{eqn:elementofAz2} as follows.
\begin{equation}
 \label{eqn:elementofAz3}
  (\A\z)_j = z_{\gm(j)} + \mathop{\sum_{i\in \support_0} z_i}_{e_{ij}\in E'(\support_0)\quad} + \sum_{l\geq 1} \mathop{\sum_{i\in \support_l} z_i}_{e_{ij}\in E\quad}\,.
\end{equation}
 
We multiply \eqref{eqn:elementofAz3} by $\om_{\gm(j)}$ and then we take absolute values to get
\begin{align}
 \om_{\gm(j)}|z_{\gm(j)}| &= \mathop{|\sum_{i\in \support_0} \om_{\gm(j)}z_i}_{e_{ij}\in E'(\support_0)\qquad\quad} + \sum_{l\geq 1} \mathop{\sum_{i\in \support_l} \om_{\gm(j)}z_i}_{e_{ij}\in E\qquad\quad} + ~\om_{\gm(j)}(\A\z)_j | \nonumber\\
 \label{eqn:elementofAz4}
 &\leq \mathop{\sum_{i\in \support_0} \om_{\gm(j)}|z_i|}_{e_{ij}\in E'(\support_0)\qquad\quad} + \sum_{l\geq 1} \mathop{\sum_{i\in \support_l} \om_{\gm(j)}|z_i|}_{e_{ij}\in E\qquad\quad} + ~\om_{\gm(j)}|(\A\z)_j| \\
 \label{eqn:elementofAz5}
 & \leq \mathop{\sum_{i\in \support_0} \om_i|z_i|}_{e_{ij}\in E'(\support_0)\qquad} + \sum_{l\geq 1} \mathop{\sum_{i\in \support_l} \om_i|z_i|}_{e_{ij}\in E\qquad} + ~\om_{\gm(j)}|(\A\z)_j| \,.
\end{align}
In \eqref{eqn:elementofAz4} we used the triangle inequality; while in \eqref{eqn:elementofAz5} we used the ordering of the entries of $\w$. Now we can bound  \eqref{eqn:wrnsp3} using the bound in \eqref{eqn:elementofAz5} as follows.
\begin{align}
\label{eqn:wrnsp4}
d\|\z_{\support_0}\|_{\om,1} & \leq \quad 2\sum_{j\in\Gamma(\support_0)} \mathop{\sum_{i\in \support_0} \om_i |z_i|}_{e_{ij} \in E'(\support_0)\quad} + \sum_{j\in\Gamma(\support_0)} \sum_{l\geq 1} \mathop{\sum_{i\in \support_l} \om_i|z_i|}_{e_{ij}\in E\qquad} + \sum_{j\in\Gamma(\support_0)} \om_{\gm(j)}|(\A\z)_j| \\
\label{eqn:wrnsp5}
& = \quad 2\sum_{e_{ij} \in E'(\support_0)}  \om_i |z_i| + \sum_{l\geq 1} \sum_{j\in\Gamma(\support_0)} \mathop{\sum_{i\in \support_l} \om_i|z_i|}_{e_{ij}\in E\qquad} + \sum_{j\in\Gamma(\support_0)} \om_{\gm(j)}|(\A\z)_j| \,.
\end{align}
In \eqref{eqn:wrnsp5} we used the fact that the double summation in the first term of the right hand side of  \eqref{eqn:wrnsp4} is equivalent to a single summation over all the edges in $E'(\support_0)$. Let the set of edges from vertex sets $\support_a$ and $\support_b$ be denoted as $E(\support_a : \support_b)$ . We upper bound the second term of \eqref{eqn:wrnsp5} in the following way.
\begin{align}
\sum_{l\geq 1} \sum_{j\in\Gamma(\support_0)} \mathop{\sum_{i\in \support_l} \om_i|z_i|}_{e_{ij}\in E\qquad} & \leq \sum_{l\geq 1} |E(\Gm(\support_0) : \support_{l})| \max_{i\in \support_{l}} \{\om_i|z_i|\}  \nonumber\\
\label{eqn:wrnsp5a1}
& \leq \sum_{l\geq 1} |E(\Gm(\support_0) : \support_{l})| \left(\frac{1}{k} \sum_{i\in \support_{l-1}} \om_i|z_i| \right)\,,
\end{align}
where  \eqref{eqn:wrnsp5a1} is due to the ordering assumption \eqref{eqn:orderingcondition}.
But $|E(\Gm(\support_0) : \support_{l})| = |\Gm(\support_0) \cap \Gm(\support_l) |$ and that $\Gm(\support_0) \cup \Gm(\support_l) = \Gm(\support_0 \cup \support_l)$ since $\support_0$ and $\support_l$ are disjoint. By the inclusion-exclusion principle $|\Gm(\support_0) \cap \Gm(\support_l) | = |\Gm(\support_0) | + |\Gm(\support_l) | - |\Gm(\support_0 \cup\support_l) |$ and by the expansion property of the $(k,d,\e)$-lossless expander graph $|\Gm(\support_0 \cup \support_l) | \geq (1-\e_{2k})d |\support_0 \cup \support_l |$. Thus we have 
\begin{align}
|E(\Gm(\support_0) : \support_{l})| & = |\Gm(\support_0) | + |\Gm(\support_l) | - |\Gm(\support_0 \cup\support_l) | \nonumber\\
\label{eqn:wrnsp5a2}
& \leq 2dk - 2(1-\e_{2k})dk = 2\e_{2k}dk\,,
\end{align}
where we upper bounded each of $|\Gm(\support_0) |$ and $|\Gm(\support_l) |$ by $dk$ since each node has at most $d$ neighbors.

Using this result we get the following upper bound for \eqref{eqn:wrnsp5a1}.
\begin{align}
\sum_{l\geq 1} \sum_{j\in\Gamma(\support_0)} \mathop{\sum_{i\in \support_l} \om_i|z_i|}_{e_{ij}\in E\qquad} & \leq \sum_{l\geq 1} 2\e_{2k}dk \left(\frac{1}{k} \|\z_{\support_{l-1}}\|_{\om,1} \right) \nonumber\\
& =  2\e_{2k}d \sum_{l\geq 1} \|\z_{\support_{l-1}}\|_{\om,1} \nonumber\\
\label{eqn:wrnsp5a3}
& \leq 2\e_{2k}d \|\z\|_{\om,1} \,.
\end{align}
For an upper bound of the last term of \eqref{eqn:wrnsp5} we proceed as follows. Note that this term is an inner product of two positive vectors hence we can use Cauchy-Schwarz inequality.
\begin{align}
\sum_{j\in\Gamma(\support_0)} \om_{\gm(j)}|(\A\z)_j| & \leq \sqrt{\sum_{j\in\Gamma(\support_0)} \om_{\gm(j)}^2} \sqrt{\sum_{j\in\Gamma(\support_0)} |(\A\z)_j|^2} \nonumber\\
\label{eqn:wrnsp5b1a}
& \leq \sqrt{\sum_{i\in\support_0} d\om_{i}^2} \|\A\z\|_2 \\
\label{eqn:wrnsp5b1}
&\leq \sqrt{ds} \|\A\z\|_1 \,.
\end{align}
In \eqref{eqn:wrnsp5b1a} we upper bounded by using the fact that each node has at most $d$ neighbors and that $|\Gm(\support_0)|\leq n$ to upper bound by the $\ell_2$ norm of $\A\z$. We upper bound the $\ell_2$ norm by the $\ell_1$ norm in \eqref{eqn:wrnsp5b1a} and used the bound $\sum_{i\in \support_0}\om_i^2  = \om(\support_0) \leq s$. 

Finally, we apply Lemma \ref{lem:collisionbound} to upper bound the first term of \eqref{eqn:wrnsp5} by $2\e_{2k}d\|\z_{\support_0}\|_{\om,1}$ (since $\e_{2k} \geq \e_k =: \e$). Then we use  \eqref{eqn:wrnsp5a3} and \eqref{eqn:wrnsp5b1} to respectively bound the second and the third terms of \eqref{eqn:wrnsp5} to get.
\begin{align}
\label{eqn:wrnsp6}
d\|\z_{\support_0}\|_{\om,1} & \leq 2\e_{2k}d\|\z_{\support_0}\|_{\om,1} + 2\e_{2k}d \|\z\|_{\om,1} +  \sqrt{ds} \|\A\z\|_1\\
\label{eqn:wrnsp7}
& = 4\e_{2k}d\|\z_{\support_0}\|_{\om,1} + 2\e_{2k}d \|\z_{\support_0^c}\|_{\om,1} +  \sqrt{ds} \|\A\z\|_1\,.
\end{align}
If we let $\support_0 = \support$ and the rearrange \eqref{eqn:wrnsp7} we have
\begin{equation}
\label{eqn:wrnspfinal}
\|\z_{\support}\|_{\om,1}  \leq \frac{2\e_{2k}}{1-4\e_{2k}} \|\z_{\support^c}\|_{\om,1} +  \frac{1}{\sqrt{d}(1-4\e_{2k})} \sqrt{s} \|\A\z\|_1 \,,
\end{equation}
which is the $\w$-RNSP \eqref{eqn:rnsp} with $\r$ and $\tau$ as in \eqref{eqn:wrnspconst}, hence concluding the proof.
\end{proof}

Based on Theorem \ref{thm:wrnsp} we provide reconstruction guarantees in the following theorem. 
\begin{theorem}
 \label{thm:errorbnd}
 Let $\A$ be the adjacency matrix of a $(k,d,\e)$-lossless expander graph with $\mathop{\e_{2k}<1/6}$. Given any $\x\in\RR^N$, if $\obs = \A\x + \noise$ with $\|\noise\|_1\leq \eta$, a solution $\sol$ of \eqref{eqn:wl1min} is an approximation of $\x$ with the following error bounds
 \begin{align}
  \label{eqn:errorwl1c}
  \|\sol - \x\|_{\om,1} \leq C_1 \sigma_s(\x)_{\om,1} + C_2 \sqrt{s} \eta, 
 \end{align}
 where the constants $C_1,C_2>0$ depend only on $d$ and $\e$.
\end{theorem}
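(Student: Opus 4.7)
The plan is to derive Theorem \ref{thm:errorbnd} as a fairly standard consequence of the $\w$-RNSP established in Theorem \ref{thm:wrnsp}, following the template of NSP-based error analysis for $\ell_1$ minimization adapted to the weighted setting. I would let ${\bf v} = \sol - \x$ denote the error vector and choose $\support \subset [N]$ to be the index set of the weighted best $s$-term approximation of $\x$, so that $\om(\support) \leq s$ and $\|\x_{\support^c}\|_{\om,1} = \sigma_s(\x)_{\om,1}$. Two basic facts drive everything: (i) since $\|\noise\|_1 \leq \eta$, the true signal $\x$ is feasible for \eqref{eqn:wl1min}, so optimality of $\sol$ gives $\|\sol\|_{\om,1} \leq \|\x\|_{\om,1}$; and (ii) both $\sol$ and $\x$ satisfy the constraint, so by the triangle inequality $\|\A{\bf v}\|_1 \leq \|\A\sol - \obs\|_1 + \|\A\x - \obs\|_1 \leq 2\eta$.

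Next I would turn the optimality inequality into a cone-type bound on ${\bf v}$. Splitting $\sol = \x + {\bf v}$ on $\support$ and $\support^c$ and using the reverse triangle inequality gives
\begin{equation*}
\|\x_\support\|_{\om,1} + \|\x_{\support^c}\|_{\om,1} = \|\x\|_{\om,1} \geq \|\sol\|_{\om,1} \geq \|\x_\support\|_{\om,1} - \|{\bf v}_\support\|_{\om,1} + \|{\bf v}_{\support^c}\|_{\om,1} - \|\x_{\support^c}\|_{\om,1},
\end{equation*}
which after rearrangement yields $\|{\bf v}_{\support^c}\|_{\om,1} \leq \|{\bf v}_\support\|_{\om,1} + 2\sigma_s(\x)_{\om,1}$. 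I would then invoke the $\w$-RNSP of Theorem \ref{thm:wrnsp} applied to ${\bf v}$ with this $\support$, giving $\|{\bf v}_\support\|_{\om,1} \leq \rho\|{\bf v}_{\support^c}\|_{\om,1} + \tau\sqrt{s}\,\|\A{\bf v}\|_1 \leq \rho\|{\bf v}_{\support^c}\|_{\om,1} + 2\tau\sqrt{s}\,\eta$. Substituting into the previous inequality and using $\rho < 1$ (which is exactly what $\e_{2k} < 1/6$ buys us in \eqref{eqn:wrnspconst}) isolates
\begin{equation*}
\|{\bf v}_{\support^c}\|_{\om,1} \leq \frac{2}{1-\rho}\sigma_s(\x)_{\om,1} + \frac{2\tau}{1-\rho}\sqrt{s}\,\eta.
\end{equation*}

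Finally, bounding $\|\sol - \x\|_{\om,1} = \|{\bf v}_\support\|_{\om,1} + \|{\bf v}_{\support^c}\|_{\om,1} \leq (1+\rho)\|{\bf v}_{\support^c}\|_{\om,1} + 2\tau\sqrt{s}\,\eta$ and plugging in the bound above produces the claimed estimate with explicit constants
\begin{equation*}
C_1 = \frac{2(1+\rho)}{1-\rho}, \qquad C_2 = \frac{2(1+\rho)}{1-\rho}\tau + 2\tau,
\end{equation*}
both of which depend only on $\e$ (through $\e_{2k}$) and $d$ via the expressions in \eqref{eqn:wrnspconst}. I do not anticipate a real obstacle here since the hard combinatorial work already lives inside Theorem \ref{thm:wrnsp}; the only points requiring care are verifying feasibility of $\x$ (to get the factor $2\eta$ on $\|\A{\bf v}\|_1$) and checking that the weighted best $s$-term set $\support$ indeed satisfies $\om(\support) \leq s$ and makes $\|\x_{\support^c}\|_{\om,1}$ coincide with $\sigma_s(\x)_{\om,1}$ in \eqref{eqn:stermapprox1}, which is precisely the content of the discussion after \eqref{eqn:stermapprox}.
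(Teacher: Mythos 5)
Your proposal is correct and follows essentially the same route as the paper: the paper factors the argument through an intermediate lemma (Lemma \ref{lem:errorbnd}) that combines the optimality inequality $\|\sol\|_{\om,1}\leq\|\x\|_{\om,1}$ with the $\w$-RNSP, and then the theorem's proof adds exactly your feasibility observation $\|\A(\sol-\x)\|_1\leq 2\eta$. Your bookkeeping (solving for $\|{\bf v}_{\support^c}\|_{\om,1}$ rather than rewriting the RNSP in terms of $\|{\bf v}\|_{\om,1}$ as in \eqref{eqn:lrnspmod}) is only cosmetically different and in fact yields the same constants up to the paper's slight slack in $c_2$.
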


Before we prove Theorem \ref{thm:errorbnd}, we state and prove a lemma, which is key to that proof. 
\begin{lemma}
 \label{lem:errorbnd}
 If $\A$ satisfies $\w$-RNSP \eqref{eqn:rnsp} with $\r<1$ and $\tau>0$, then given any $\x,\z\in\RR^N$ with $ \|\z\|_{\om,1} \leq  \|\x\|_{\om,1}$, we have
 \begin{align}
  \label{eqn:errorwl1a}
  \|\z - \x\|_{\om,1} \leq c_1 \sigma_s(\x)_{\om,1} + c_2 \sqrt{s}\|\A\left(\z - \x\right)\|_{1}, 
 \end{align}
 where the constants $c_1,c_2>0$ depend only on $d$ and $\e$.
\end{lemma}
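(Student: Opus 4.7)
The plan is to follow the standard null-space-property-to-error-bound argument, adapted to the weighted setting. Let $\support\subset[N]$ be a set with $\omega(\support)\leq s$ that achieves the weighted best $s$-term approximation, so that $\sigma_s(\x)_{\om,1} = \|\x_{\support^c}\|_{\om,1}$, and set $\v = \z - \x$. The goal is to control $\|\v\|_{\om,1} = \|\v_\support\|_{\om,1} + \|\v_{\support^c}\|_{\om,1}$ in terms of $\sigma_s(\x)_{\om,1}$ and $\|\A\v\|_1$.

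First I would exploit the hypothesis $\|\z\|_{\om,1} \leq \|\x\|_{\om,1}$. Writing $\z = \x + \v$ and splitting the weighted $\ell_1$ norm over $\support$ and $\support^c$, the reverse triangle inequality on $\support$ gives $\|\z_\support\|_{\om,1} \geq \|\x_\support\|_{\om,1} - \|\v_\support\|_{\om,1}$, while on $\support^c$ the triangle inequality gives $\|\z_{\support^c}\|_{\om,1} \geq \|\v_{\support^c}\|_{\om,1} - \|\x_{\support^c}\|_{\om,1}$. Summing and comparing with $\|\x\|_{\om,1} = \|\x_\support\|_{\om,1} + \|\x_{\support^c}\|_{\om,1}$ yields the key estimate
\begin{equation*}
\|\v_{\support^c}\|_{\om,1} \leq \|\v_\support\|_{\om,1} + 2\,\sigma_s(\x)_{\om,1}.
\end{equation*}

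Next I would invoke the $\w$-RNSP \eqref{eqn:rnsp} applied to $\v$ and the same set $\support$ (since $\omega(\support)\leq s$), giving
\begin{equation*}
\|\v_\support\|_{\om,1} \leq \r\|\v_{\support^c}\|_{\om,1} + \tau\sqrt{s}\,\|\A\v\|_1.
\end{equation*}
Substituting this into the previous inequality and using $\r<1$ to absorb $\r\|\v_{\support^c}\|_{\om,1}$ into the left-hand side produces
\begin{equation*}
\|\v_{\support^c}\|_{\om,1} \leq \frac{\tau\sqrt{s}}{1-\r}\,\|\A\v\|_1 + \frac{2}{1-\r}\,\sigma_s(\x)_{\om,1}.
\end{equation*}
Feeding this back once more into the $\w$-RNSP bound for $\|\v_\support\|_{\om,1}$ and adding the two pieces delivers
\begin{equation*}
\|\v\|_{\om,1} = \|\v_\support\|_{\om,1} + \|\v_{\support^c}\|_{\om,1} \leq c_1\sigma_s(\x)_{\om,1} + c_2\sqrt{s}\,\|\A\v\|_1,
\end{equation*}
with $c_1 = \tfrac{2(1+\r)}{1-\r}$ and $c_2 = \tfrac{(1+\r)\tau}{1-\r}$, both depending only on $\r$ and $\tau$, hence (via Theorem~\ref{thm:wrnsp}) only on $d$ and $\e$.

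There is no serious obstacle here; the argument is a direct weighted adaptation of the standard NSP-implies-error-bound lemma. The only subtlety worth being careful about is that $\support$ is chosen as the minimizer of the weighted best $s$-term approximation rather than simply the top-$k$ indices, so that $\omega(\support)\leq s$ is genuinely available to feed into the $\w$-RNSP; once this point is fixed the remainder is bookkeeping with the triangle inequality and a single geometric absorption using $\r<1$.
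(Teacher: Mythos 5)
Your proof is correct and follows essentially the same route as the paper: both derive the cone-type inequality $\|(\z-\x)_{\support^c}\|_{\om,1} \leq \|(\z-\x)_{\support}\|_{\om,1} + 2\sigma_s(\x)_{\om,1}$ from $\|\z\|_{\om,1}\leq\|\x\|_{\om,1}$ and then invoke the $\w$-RNSP on $\z-\x$, differing only in that the paper substitutes once using the equivalent form $\|{\bf v}_{\support}\|_{\om,1}\leq\frac{\r}{1+\r}\|{\bf v}\|_{\om,1}+\frac{\tau}{1+\r}\sqrt{s}\|\A{\bf v}\|_1$ while you do a two-step absorption. The only blemish is bookkeeping: your final constant should be $c_2=\frac{(1+\r)\tau}{1-\r}+\tau=\frac{2\tau}{1-\r}$ rather than $\frac{(1+\r)\tau}{1-\r}$, which is immaterial since the lemma does not fix the constants.
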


\begin{proof}
 Let $\support$ with $|\support|\leq k$ such that $\om(\support)\leq s$. Then we have
 \begin{align}
\|\x\|_{\om,1} 	& \geq \|\z\|_{\om,1} \nonumber\\
		& = \|\z - \x + \x\|_{\om,1}\nonumber \\
 		& = \|\left(\z - \x + \x\right)_{\support}\|_{\om,1} + \|\left(\z - \x + \x\right)_{\support^c}\|_{\om,1}\nonumber\\
\label{eqn:prfthm01}		
 		& \geq \|\x_{\support}\|_{\om,1} - \|\left(\z - \x\right)_{\support}\|_{\om,1} + \|\left(\z - \x \right)_{\support^c}\|_{\om,1} - \|\x_{\support^c}\|_{\om,1} \\
\label{eqn:prfthm02}		
		& = \|\x\|_{\om,1} - 2\|\x_{\support^c}\|_{\om,1} - 2\|\left(\z - \x\right)_{\support}\|_{\om,1} + \|\z - \x \|_{\om,1}.
\end{align}
In \eqref{eqn:prfthm01} we used the triangle inequality; while in \eqref{eqn:prfthm02} we used the decomposability (separability) of the $\ell_{\om,1}$ norm, i.e. $\|(\cdot)_{\support}\|_{\om,1} + \|(\cdot)_{\support^c}\|_{\om,1} = \|\cdot\|_{\om,1}$. Simplifying and rearranging \eqref{eqn:prfthm02} gives
\begin{equation}
\label{eqn:prfthm03}	
2\|\x_{\support^c}\|_{\om,1} \geq \|\z - \x \|_{\om,1} - 2\|\left(\z - \x\right)_{\support}\|_{\om,1}.
\end{equation}
Now we are ready to use $\w$-RNSP1 to upper bound the last term of \eqref{eqn:prfthm03} by replacing ${\bf v}$ in \eqref{eqn:rnsp}  by $\z - \x$. Firstly \eqref{eqn:rnsp} can be rewritten as
\begin{equation}
 \label{eqn:lrnspmod}
  \|{\bf v}_{\mathcal{S}}\|_{\om,1} \leq \frac{\r}{1 + \r}\|{\bf v}\|_{\om,1} + \frac{\tau}{1 + \r}\sqrt{s}\|\A{\bf v}\|_{1}\,.
 \end{equation}
Therefore, using \eqref{eqn:lrnspmod} with ${\bf v} = \z - \x$, \eqref{eqn:prfthm03} becomes
 \begin{equation}
 \label{eqn:prfthm04}
 2\|\x_{\support^c}\|_{\om,1} \geq \|\z - \x \|_{\om,1} - \frac{2\r}{1 + \r}\|\z - \x\|_{\om,1} - \frac{2\tau}{1 + \r}\sqrt{s}\|\A\left(\z - \x\right)\|_{1}\,.
\end{equation}
Simplifying and rearranging \eqref{eqn:prfthm04} yields
\begin{equation}
\label{eqn:prfthm05}
\|\z - \x\|_{\om,1} \leq \frac{2(1 + \r)}{1 - \r}\|\x_{\support^c}\|_{\om,1} + \frac{4\tau}{1 - \r}\sqrt{s}\|\A\left(\z - \x\right)\|_{1}.
\end{equation}
Using the definition of $\sigma_s(\x)_{\om,1}$ from \eqref{eqn:stermapprox1} yields \eqref{eqn:errorwl1a} from \eqref{eqn:prfthm05} with 
\begin{equation}
\label{eqn:errorwl1const}
 c_1 = \frac{2(1 + \r)}{1 - \r}, \quad \mbox{and} \quad c_2 = \frac{4\tau}{1 - \r}. 
\end{equation}
This concludes the proof of the lemma.
\end{proof}


\begin{proof}{\em (Theorem \ref{thm:errorbnd})}
It can be easily seen that the theorem is a corollary of the Lemma  \ref{lem:errorbnd}. The recovery error bounds in \eqref{eqn:errorwl1c} follow from the error bounds in \eqref{eqn:errorwl1a} in Lemma \ref{lem:errorbnd} by replacing $\z$ with $\sol$ and using the triangle inequality to bound the following: $\|\A\left(\sol - \x\right)\|_{1} \leq \|\A\sol - \obs\|_{1} + \|\A\x - \obs\|_{1} \leq 2\eta$. 
Hence $C_1 = c_1$, and $C_2 = 2c_2$.
Finally, for $\A$ to satisfy the $\w$-RNSP with $\r<1$ and $\tau>0$ we require $\e_{2k} < 1/6$. 
\end{proof}

\subsection{Sample complexity}\label{sec:sample}
Here we derive sample complexities in terms of the weighted sparsity, $s$, of weighted sparse recovery using weighted $\ell_1$-minimization with sparse adjacency matrices of $(k,d,\e)$-lossless expander graphs. These sample complexity bounds are linear in the weighted sparsity of the signal and can be smaller than sample complexities of standard sparse recovery using unweighted $\ell_1$-minimization with and sparse adjacency matrices of $(k,d,\e)$-lossless expander graphs. Moreover, these results recover known results for the settings of a) uniform weights, b) polynomially growing weights, c) sparse recovery with prior support estimates, and d) known support. In particular, in the setting of {\em sparse recovery with prior support estimates}, depending on mild assumptions on the growth of the weights and how well is the support estimate aligned with the true support will lead to a reduction in sample complexity.  
The following derivations, without loss of generality, assume an ordering of the entries of the signal in order of magnitude such that $\support$ has the first $k$ largest in magnitude entries of the signal. 

\begin{theorem}
\label{thm:sampcomp_new}
Fix weights $\om_j \geq 1$. Suppose that $\gm > 0$ depending on the choice of weights, and $0\leq \d <1$.  Consider an adjacency matrix of a $(k,d,\e)$-lossless expander $\A\in\{0,1\}^{\dimensions}$, and a signal $\mathop{\x\in\RR^N}$ supported on $\support \subset [N]$ with $|\support|\leq k$ and  $\mathop{\sum_{i \in \support} \om_i^2 \leq s}$. Assume that noisy measurements are taken,  $\mathop{\obs = \A\x + \noise}$ with $\|\noise\|_1\leq \eta$ and $\e_{2k} < 1/6$. Then with probability at least $1-\d$, any solution $\sol$ of \eqref{eqn:wl1min} satisfies \eqref{eqn:errorwl1c}, if
 \begin{equation}
\label{eqn:sampcomp_new}
n = \bigO\left({s}/{(\e^2\gm)}\right), \quad \mbox{and} \quad d = \bigO\left({\e n}/{k}\right)\,.
\end{equation}
\end{theorem}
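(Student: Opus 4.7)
The plan is to combine Theorem \ref{thm:errorbnd} with a probabilistic construction of $\A$. I would draw $\A$ as the adjacency matrix of a random $d$-left-regular bipartite graph, where each of the $N$ left nodes picks its $d$ neighbors independently and uniformly from $[n]$, and then show that the $(k,d,\e)$-lossless expansion property holds with probability at least $1-\d$ provided $n$ and $d$ satisfy \eqref{eqn:sampcomp_new}. Once this is established, the conclusion \eqref{eqn:errorwl1c} follows directly from Theorem \ref{thm:errorbnd}, since the hypothesis $\e_{2k} < 1/6$ immediately triggers the $\w$-RNSP of Theorem \ref{thm:wrnsp}.

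First, for a fixed $\support \subset [N]$ with $|\support| = k'$, I would bound the probability that $|\Gamma(\support)| < (1-\e)dk'$ by a standard balls-into-bins calculation: the $dk'$ endpoints of the edges leaving $\support$ are i.i.d.\ uniform on $[n]$, so the probability that all of them land in some specified subset of $(1-\e)dk'$ right nodes is at most $\left((1-\e)dk'/n\right)^{dk'}$, and there are $\binom{n}{(1-\e)dk'}$ such subsets. A Stirling simplification then yields a per-set failure bound of the form $\left(c\,dk'/(\e n)\right)^{\e d k'}$ for an absolute constant $c$.

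Second, I would apply a union bound not over all $k$-subsets but only over those $\support$ with $|\support|\le k$ \emph{and} $\om(\support)\le s$, since these are the only sets that matter for the $\w$-RNSP (and hence for Theorem \ref{thm:errorbnd}). The parameter $\gm$ is designed precisely to quantify the growth of the count of such weighted $s$-sparse subsets: if that count is at most $\exp(c\, s/\gm)$, then picking $n = \bigO\!\left(s/(\e^2\gm)\right)$ together with $d = \bigO\!\left(\e n/k\right)$ forces the per-set failure bound to decay like $\exp(-c' s/\gm)$, which absorbs the union bound and leaves the residual $\d$. Summing over $1\le k' \le 2k$ only contributes a polynomial-in-$k$ factor that is easily swallowed by the constants.

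The main obstacle will be to formulate $\gm$ cleanly enough that all four special cases discussed in Section \ref{sec:disc} — uniform weights, polynomially growing weights, sparse recovery with prior support estimates, and fully known support — recover their known sampling rates by simple substitution. A secondary subtlety is that the two set-size constraints are not interchangeable: the expansion must be verified for every $k$-subset, yet the union bound is naturally indexed by weighted $s$-sparsity, so one has to argue that restricting to $\om(\support)\le s$ (together with $|\support|\le k$) yields a combinatorial count that is strictly smaller than the loose $\binom{N}{k}$ and is what ultimately produces the linear-in-$s$ sample complexity rather than a linear-in-$k$ one.
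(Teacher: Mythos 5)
Your first two steps match the paper: the reduction to existence of the expander via Theorem \ref{thm:errorbnd}, the random $d$-left-regular construction, and the per-set failure bound of the form $\left(\mu\,\e n/(dt)\right)^{-\e d t}$ (which the paper imports as Lemma \ref{lem:tailbound} from the literature rather than rederiving). The divergence --- and the gap --- is in your union bound. You propose to union only over sets $\support$ with $|\support|\le k$ \emph{and} $\om(\support)\le s$, on the grounds that these are the only sets the $\w$-RNSP cares about. But the hypothesis of Theorem \ref{thm:errorbnd} is that $\A$ is the adjacency matrix of a lossless expander with $\e_{2k}<1/6$, and the proof of Theorem \ref{thm:wrnsp} applies the expansion property to the unions $\support_0\cup\support_l$, where $\support_1,\dots,\support_q$ are size-$k$ blocks determined by the ordering of $\om_i|z_i|$ for an \emph{arbitrary} vector $\z$. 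Those blocks, and hence those $2k$-element unions, carry no weighted-sparsity constraint, so the expansion must be certified uniformly over all subsets of size up to $2k$. Your restricted union bound does not deliver that, and there is no obvious repair that keeps the count at $\exp(c\,s/\gm)$.

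Consequently the mechanism you ascribe to $\gm$ is not the one the paper uses. The paper's union bound runs over all $\binom{N}{t}$ subsets for $t\le k$, yielding the requirement $k\log(eN/k)+\log k-C_2\e^2 n\le\log\d$, i.e.\ $n=\bigO\left(k\log(N/k)/\e^2\right)$; the stated form $n=\bigO\left(s/(\e^2\gm)\right)$ is then obtained purely by \emph{choosing} the weights (and $\gm$) so that $s/\gm$ is of order $k\log(N/k)$. The paper's own remark after the proof concedes that this is a reparametrization and that no genuine reduction in sample complexity over standard sparse recovery is proved --- exactly the reduction your combinatorial-count argument was meant to produce. So your proposal is not a valid alternative proof as written: either you must enlarge the union bound to all $2k$-subsets (recovering the paper's $k\log(N/k)$ rate and abandoning the claimed savings), or you must first prove a version of Theorem \ref{thm:wrnsp} whose hypotheses only involve expansion on weighted $s$-sparse sets, which the paper does not provide.
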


\begin{proof}
Theorem \ref{thm:errorbnd} guarantees that any solution $\sol$ of \eqref{eqn:wl1min} satisfies \eqref{eqn:errorwl1c} if the sensing matrix is an adjacency matrix of a $(k,d,\e)$-lossless expander with $\e_{2k} < 1/6$. Therefore, it suffice to prove the existence of such lossless expander graphs. The proof follows what has become a standard procedure for proving probabilistic existence of expander graphs \cite{berinde2008combining,bah2013vanishingly,foucart2013mathematical}. Consequently, we will skip some of the details of the proof. Let $G = ([N],[n],\edges)$ be a bipartite graph with $N$ left and $n$ right vertices. Let each vertex in $[N]$ have a regular degree $d$. We probabilistically construct the graph by picking each node in $[N]$ and connecting it to $d$ nodes in $[n]$ chosen uniformly at random. Then we ask that for any set $\support\subset [N], ~|\support|\leq k$ with $\om(\support)\leq s$, what is the probability of failure of the graph to expand on this set? Let this event be denoted by $\fail_k$, then
\begin{equation}
 \label{eqn:failevent}
 \fail_k := |\Gm(\support)| < (1 - \epsilon)dk.
\end{equation}
Therefore, we need to compute $\hbox{Prob}\left\{ \fail_k\right\}$ for our fixed $\support$ of size $k$, which we may not be able to do but an upper bound suffices. An upper bound is given by the following lemma proven in \cite{buhrman2002bitvectors}.
\begin{lemma}
\label{lem:tailbound}
Given a left $d$-regular bipartite graph, $G = ([N],[n],\edges)$, with expansion coefficient $\epsilon$, there exist a constant $\mu > 0$ such that for any $\mathcal{T} \subseteq [N]$ with $|\mathcal{T}| = t$, whenever $\numsam = \bigO\left(dt/\epsilon\right)$, we have
\begin{align*}
\hbox{\em Prob}\left\{ \fail_t\right\} \leq \left(\mu \cdot \frac{\epsilon \numsam}{dt}\right)^{-\epsilon d t}\,.
\end{align*}
\end{lemma}

\noindent For the bipartite graph $G$ to be an expander it has to expand on all sets $\support$ of $|\support|\leq k$. So we need the probability of failure on all set $\support$ of $|\support|\leq k$ which we can bound by a union bound as follows, if we denote this probability as $p$.
\begin{equation}
 \label{eqn:unionbound}
 p = \sum_{t=1}^k \binom{N}{t} \left(\mu \cdot \frac{\epsilon \numsam}{dt}\right)^{-\epsilon d t} \leq \sum_{t=1}^k e^{N\mathcal{H}(\frac{t}{N}) - \epsilon d t \log\left(\mu \cdot \frac{\epsilon \numsam}{dt}\right)} \,,
\end{equation}
where $\mathcal{H}(q) = -q\log q - (1-q) \log(1-q)$ is the Shannon entropy function in base $e$ logarithms and we bound the combinatorial term by a bound due to  \cite{cheney1966introduction}: $\binom{N}{Nq} \leq e^{N\cdot \mathcal{H}(q)}$, for $N \in \mathbb{Z}_{+}, ~q \in [0, 1]$ such that $qN \in \mathbb{Z}_{+}$.  
From Lemma \ref{lem:tailbound} the order notation implies that there exist a constant $C_1$ such that $d \geq C_1\e n/t$. Using this lower bound on $d$ and the bound $\mathcal{H}(x) < -x\log x + x$ found in \cite{bah2014bounds} we upper bound \eqref{eqn:unionbound} as follows:
\begin{equation}
 \label{eqn:unionbound2}
  p 	\leq \sum_{t=1}^k e^{N\left(-\frac{t}{N}\log\left(\frac{t}{N}\right) + \frac{t}{N}\right) - C_1\epsilon^2 n \log\left(\mu/ C_1\right)} = \sum_{t=1}^k e^{t\log\left({eN}/{t}\right) - C_2 \epsilon^2 n}\,, 
\end{equation}
where $C_2 = C_1\log\left(\mu/C_1\right)$. The function $t\log\left({eN}/{t}\right)$ is monotonically increasing in $t\geq 1$ and so its maximum occurs at $t=k$. Hence we can upper bound \eqref{eqn:unionbound2} as thus
\begin{equation}
 \label{eqn:unionbound3}
  p 	\leq k e^{k\log\left({eN}/{k}\right) - C_2 \epsilon^2 n}\,.
\end{equation}
For an expansion probability at least $1-\d$ we require $p \leq \d$, which hold if 
\begin{equation}
 \label{eqn:unionbound4a}
  k e^{k\log\left({eN}/{k}\right) - C_2 \epsilon^2 n} \leq \d, \quad \Rightarrow \quad \log(k) + k\log\left({eN}/{k}\right) - C_2 \epsilon^2 n \leq \log(\d)\,.
\end{equation}
We then choose weights such that $s/\gm$ is of the order of $k\log\left({N}/{k}\right)$. Examples of weight and $\gm$ choices are discussed in the next section. This concludes the proof.
\end{proof}

\begin{remark}
The proof requires that $s/\gm$ to be of the order of $k\log\left({N}/{k}\right)$, implying that we don't gain any reduction in the sample complexity in weighted sparse recovery over standard sparse recovery. This is an artifact of the proof technique. It is counter intuitive and the experiments (Section \ref{sec:numerics}) explicitly show the contrary with weighted sparse recovery having higher phase transitions (implying lower sampling rates) than standard sparse recovery. Nonetheless, it is interesting to express sample rates in terms of the weighted sparsity as this will guide the choice of weights.
\end{remark}

\subsection{Discussion on the choice of weights}\label{sec:disc}
Theorem \ref{thm:sampcomp_new} requires dependence of $\gm$ on the choice of weights, precisely it suffice to fix the weights such that $s/\gm$ of the order of $k\log\left({N}/{k}\right)$. Below we discuss the choice of weights and hence the choice of $\gm$, where these choices recovers existing results, similar to results shown in \cite{bah2015sample} for Gaussian sampling matrices.

\begin{itemize}
\item {\bf Uniform weights.}  In standard sparse recovery using unweighted $\ell_1$-minimization, the weights are $\om_i = 1$ for all $i\in [N]$.  This is a special case of Theorem \ref{thm:sampcomp_new} with $s = k$ and $\gm = \left(2 \log(N/k)\right)^{-1}$, thus recovering the known sample complexity results for standard sparse recovery with adjacency matrices of $(k,d,\e)$-lossless expander graphs, \cite{berinde2008combining,bah2013vanishingly,foucart2013mathematical}:
\begin{equation}
\label{eqn:sampcompstd}
n = \bigO \left(k \log(N/k)/\e^2\right), \quad \mbox{and} \quad d = \bigO \left(\log(N/k)/\e\right)\,.
\end{equation}

\item {\bf Polynomially growing weights.}  The idea of using polynomially growing weights was proposed in \cite{rauhut2015interpolation}, in the context of application to smooth function interpolation.  Precisely, the authors proposed weights $\om_i = i^{\alpha/2}$ for $\alpha \geq 0$. 
Using a number theoretic results due to \cite{shekatkar2012sum} we derive the following bound for a weighted $s$-sparse set $\support$ of cardinality $k$ (see details in Appendix): 
\begin{equation}
\label{eqn:numtheoreticbnd}
\sum_{i\in\support} \om_i^2 = \sum_{i\in\support} i^{\alpha} \leq (k+1)^{1+\alpha}\,.
\end{equation}
As such the weights (or $\alpha$) are chosen such that $(k+1)^{1+\alpha}$ of the order of $\gm k\log\left({N}/{k}\right)$ leading to \eqref{eqn:sampcomp_new}. Interestingly, if $\alpha = 0$ we recovery the standard sparse recovery result \eqref{eqn:sampcompstd} by choosing $\gm = \left(2 \log(N/k)\right)^{-1}$. Furthermore, since $(k+1)^{1+\alpha} \leq s ~\Rightarrow ~k \leq s^{\frac{1}{1+\alpha}}$ if we let $N = s^{1/\alpha}$ we have that $s/\gm$ is of the order of $\bigO\left(s^{\frac{1}{1+\alpha}}\log(s) \right)$, which is similar to a sample complexity suggested in \cite{rauhut2015interpolation} for dense random matrices.

\pagebreak
\item {\bf Sparse recovery with prior support estimates.}  In this case, we know $\suppest$ as an estimate of the true support $\support$ and typically we assign weights $\om_i = w \in [0,1]$ for $i \in \suppest$ and $\om_i = 1$ for $i\in \suppest^c$. Note that $\om_i \leq 1$ for all $i\in [N]$, contrary to the the setting of this work where $\om_i \geq 1$. Without loss of generality, we normalize by dividing by $w$ (for $w>0$) to get $\om_i = w_1 = 1$ for $i \in \suppest$ and $\om_i = w_2 = 1/w \geq 1$ for $i\in \suppest^c$ (if $w=0$, we divide by $w+\varepsilon$ with a small number $\varepsilon > 0$).  
The weighted cardinality of the support is
\begin{equation}
\label{s:prior}
\sum_{i\in \support} \om_i^2 = | \support \cap \suppest | + w^{-2} | \support \cap \suppest^c |\,.
\end{equation}
Like in \cite{rauhut2015interpolation} we can choose weights and $\gm$ as follows.
\begin{equation}
\label{eqn:kappa}
\om_i^2 \geq  \max\left\{ 1, 2\gm \log(j/s) \right\}\,,  \quad \text{with} \quad \gm = \min\left\{1,\left(2w^2 \log(N/s)\right)^{-1}\right\}\,.
\end{equation}
From Theorem \ref{thm:sampcomp_new} we have sample complexities $n = \bigO(s)$ if $\gm = 1$ and the more interesting case is when $\gm = \left(2w^2 \log(N/s)\right)^{-1}$, then
\begin{align}
\label{prior:support}
n = \bigO \left({s}/{\gm} \right) = \bigO \left( s w^2 \log(N/s) \right) =  \bigO \left(  \left( w^2| \support \cap \suppest | +  | \support \cap \suppest^c | \right)\log(N/s) \right)\,.
\end{align}
Let $| \suppest | = \beta | \support| = \beta k,$ and $| \support \cap \suppest | = \alpha | \suppest |$, where $\alpha, \beta \in [0,1]$.  Then the sampling bound in \eqref{prior:support} is bounded above by
\begin{equation}
\label{nusampleboundUS}
n = \bigO \left( \left(w^2 k + r\right) \log(N/k) \right),
\end{equation}
where $r = | \support^c \cap \suppest | + | \support \cap \suppest^c |$ represents the mismatch between the true and estimated supports.  Getting results similar to results in \cite{mansour2014recovery} for Gaussian matrices.

\item {\bf Known support.}  When the support of $\x$ coincides with the estimated support exactly, then $| \support \cap \suppest |  = s = k$ and $| \support \cap \suppest^c | = 0$, and the sample complexity becomes 
$$
n = \bigO(s) = \bigO(k),
$$
recovering the sample complexity of standard sparse recovery with known support.
\end{itemize}

\section{Experimental results} \label{sec:numerics}

In these experiments we consider the class of {\em weighted sparse signals} modeled in \cite{bah2015sample}.  Precisely, the probability for an index to be in the support of the signal is proportional to the reciprocal of the square of the weights assigned to that index. We also considered {\em polynomially growing weights}. In particular, we assign weights  $\om_j = j^{1/5}$ where the indices are ordered such that the support corresponds to the smallest in magnitude set of weights. 
The goal of the experiments was to compare the performance of weighted sparse recovery using weighted $\ell_1$-minimization and standard sparse recovery using unweighted $\ell_1$-minimization using both Gaussian sensing matrices and sensing matrices that are sparse binary adjacency matrices of expander graphs (hence forth referred to as expander matrices) in terms of a) sample complexity b) computational runtimes, and c) accuracy of reconstruction. The $m\times N$ Gaussian matrices have i.i.d. standard normal entries scaled by $\sqrt{m}$ while the expander matrices are generated by putting $d$ ones at uniformly at random locations in each column. We draw signals of dimension $N$ from the above mentioned model, where the nonzero values are randomly generated as scaled sums of Gaussian and uniformly random variables without any normalization.  We encode the signals using these matrices and add Gaussian white noise with noise level $\|\noise\|_2 \leq 10^{-6} =: \eta_2$ and define $\eta_1$ such that $\|\noise\|_1 \leq \eta_1$. 
For the weighted sparse reconstruction, we use \eqref{eqn:wl1min} with expanders and use a modified version of \eqref{eqn:wl1min}, replacing the $\ell_1$ by $\ell_2$ and $\eta_1$ by $\eta_2$ in the data fidelity term of \eqref{eqn:wl1min}, with Gaussian matrices; while the standard sparse reconstruction used 
\begin{equation}
\label{eqn:l1min}\tag{L1}
\min_{\z\in\RR^N} ~\|\z\|_{\om,1} \quad \mbox{subject to} \quad \|\A\z - \obs\|_p \leq \eta_p,
\end{equation}
with $p = 1$ for expanders and $p=2$ for Gaussian matrices. 

The following results are averaged over many realizations for each problem instance $(s,m,N)$. The dimension of the signal is $N = 2^{10}$. For the expander matrices we fixed $d = \lceil 2\log(N)\rceil$ and we vary the number of measurements $m$ such that $~m/N\in[\max(2d/N,0.05),0.35]$; and for each $m$ we vary the weighted sparsity of the supp($\x$), $\support$, such that $\om(\support)/m = s/m \in \left[{1}/{\min(m)},2.5\right]$. Then we record $k$ as the largest $|\support|$ for a given $s$. We consider a reconstruction successful if the recovery error in the $\ell_2$-norm is below $10\eta_1$ or $10\eta_2$ for expander or Gaussian matrices respectively and a failure otherwise. Then we compute the {\em empirical probabilities} as the ratio of the number of successful reconstructions to the number of realizations. 

\begin{itemize}
\item[a)] {\bf Sample complexities via phase transitions:}
We present below sample complexity comparisons using the phase transition framework in the phase space of $(s/m,m/N)$. Note that in all the figures we normalized (standardized) the values of $s/m$ in such a way that the normalized $s/m$ is between $0$ and $1$ for fair comparison.
The left panel of Figure \ref{fig:phasetransitions} shows phase transition curves in the form of contours of empirical probabilities of 50\% (solid curves) and 95\% (dashed curves) for expander and Gaussian matrices using either $\ell_1$ or $\ell_{\om,1}$ minimization. Both matrices have similar performance and by having larger area under the contours, weighted sparse recovery using \eqref{eqn:wl1min}, outperforms standard sparse recovery using \eqref{eqn:l1min}. The result in the left panel is further elucidated by the plots in the right panel of Figure \ref{fig:phasetransitions} and left panel of Figure \ref{fig:prob_runtime}. In the latter we show a snap shot for fixed $s/m = 1.25 $ and varying $m$ while in the former we show a snap shot for fixed $m/N = 0.1625 $ and varying $s$. Both plots confirm the comparative performance of expanders to Gaussian matrices and the superiority of weighted $\ell_1$ minimization over unweighted $\ell_1$ minimization.
\begin{figure}[h!]
\centering
\includegraphics[width=0.45\textwidth]{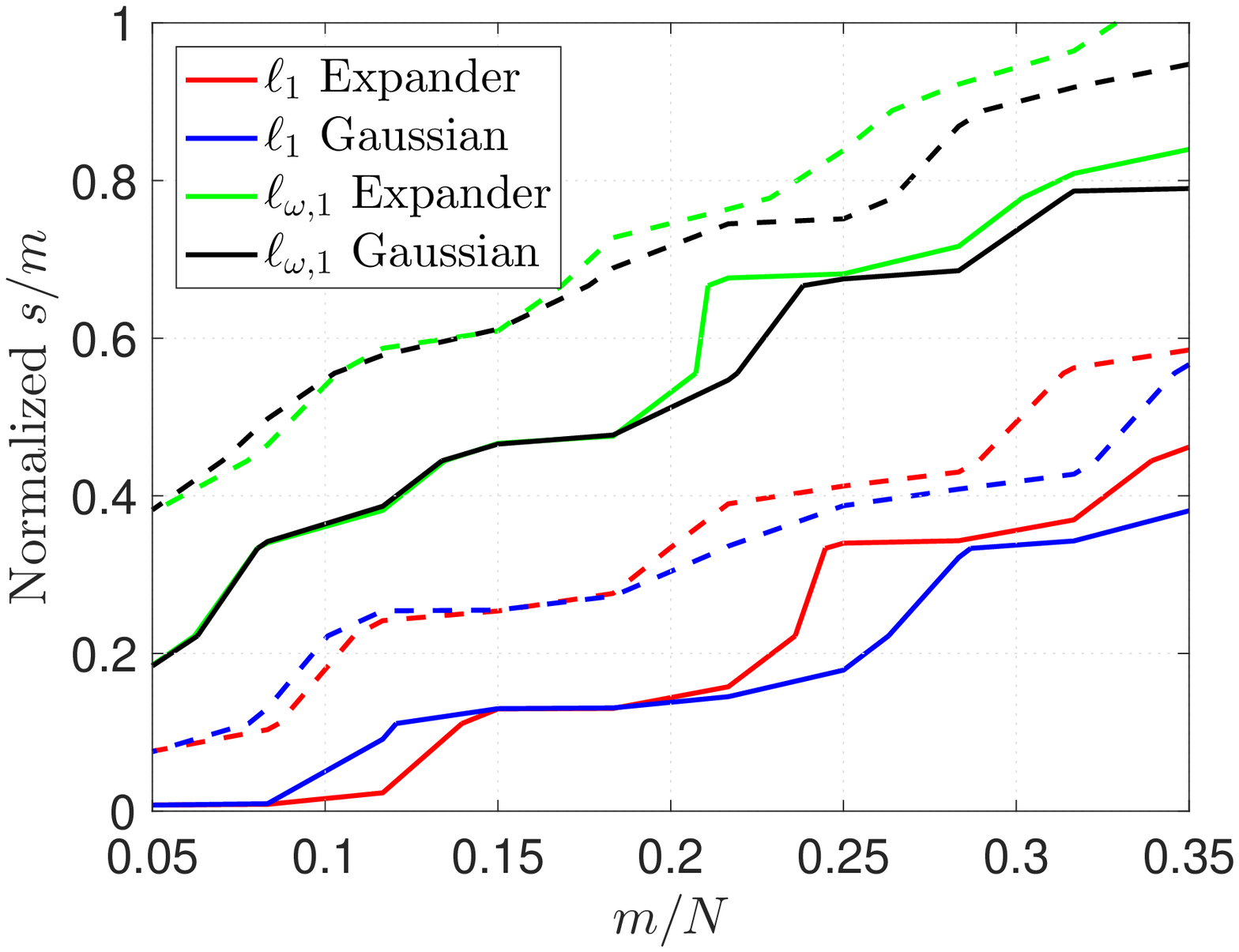} 
\includegraphics[width=0.45\textwidth]{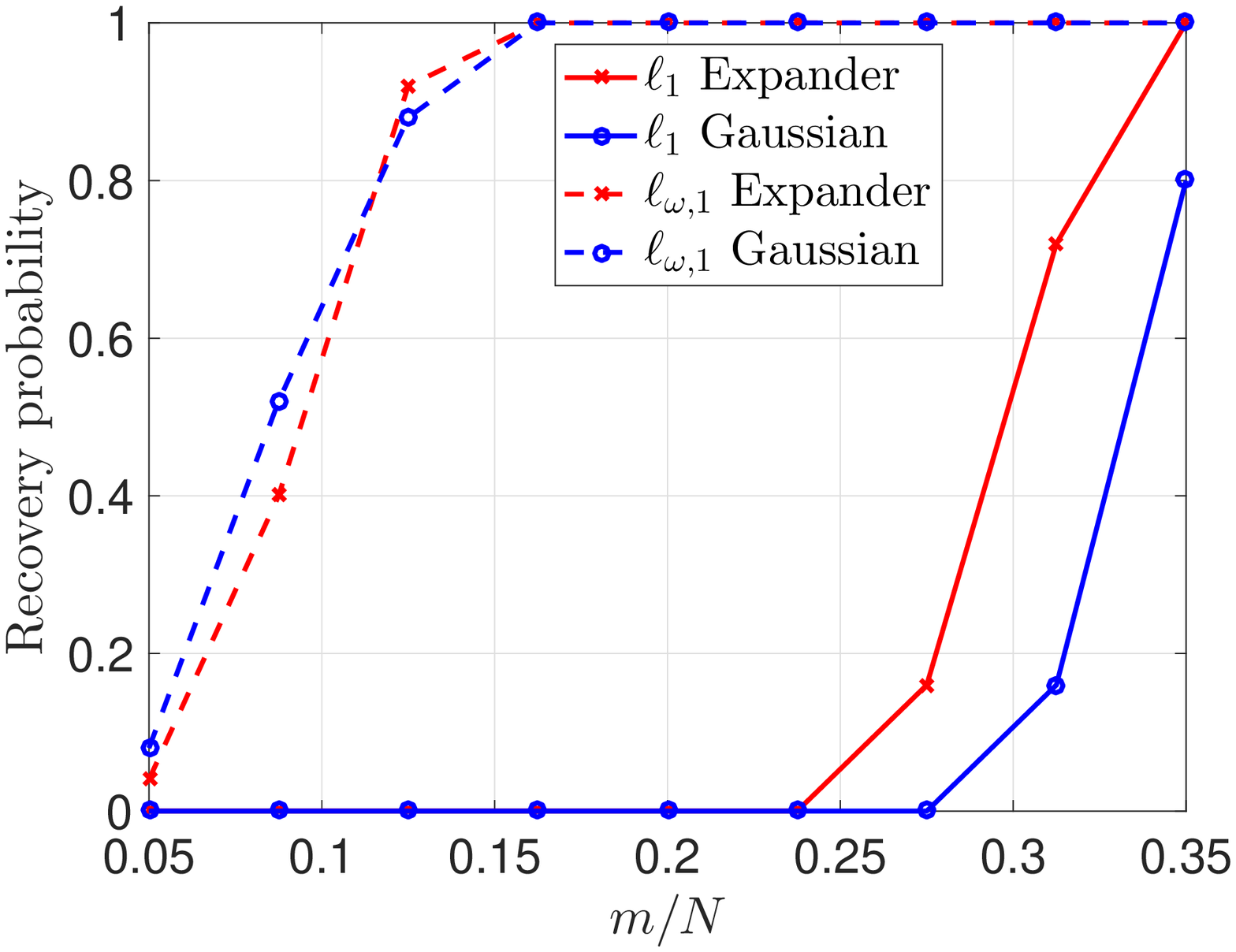} 
\caption{\small {\em Left panel}: Contour plots depicting phase transitions of 50\% and 95\% recovery probabilities  (dashed and solid curves respectively). {\em Right panel}: Recovery probabilities for a fixed $s/m = 1.25$ and varying $m$.}
\label{fig:phasetransitions}
\end{figure}

\item[b)] {\bf Computational runtimes:}
To compare runtimes we sum the generation time of $\A$ (Gaussian or expander), encoding time of the signal using $\A$, and the reconstruction time, with weighted $\ell_1$ minimization over unweighted $\ell_1$ minimization, and we average this over the number of realizations. In the right panel of Figure \ref{fig:prob_runtime} we plot average runtimes for varying $m/N$. This clearly shows that expanders have small runtimes.
\begin{figure}[h!]
\centering
\includegraphics[width=0.45\textwidth]{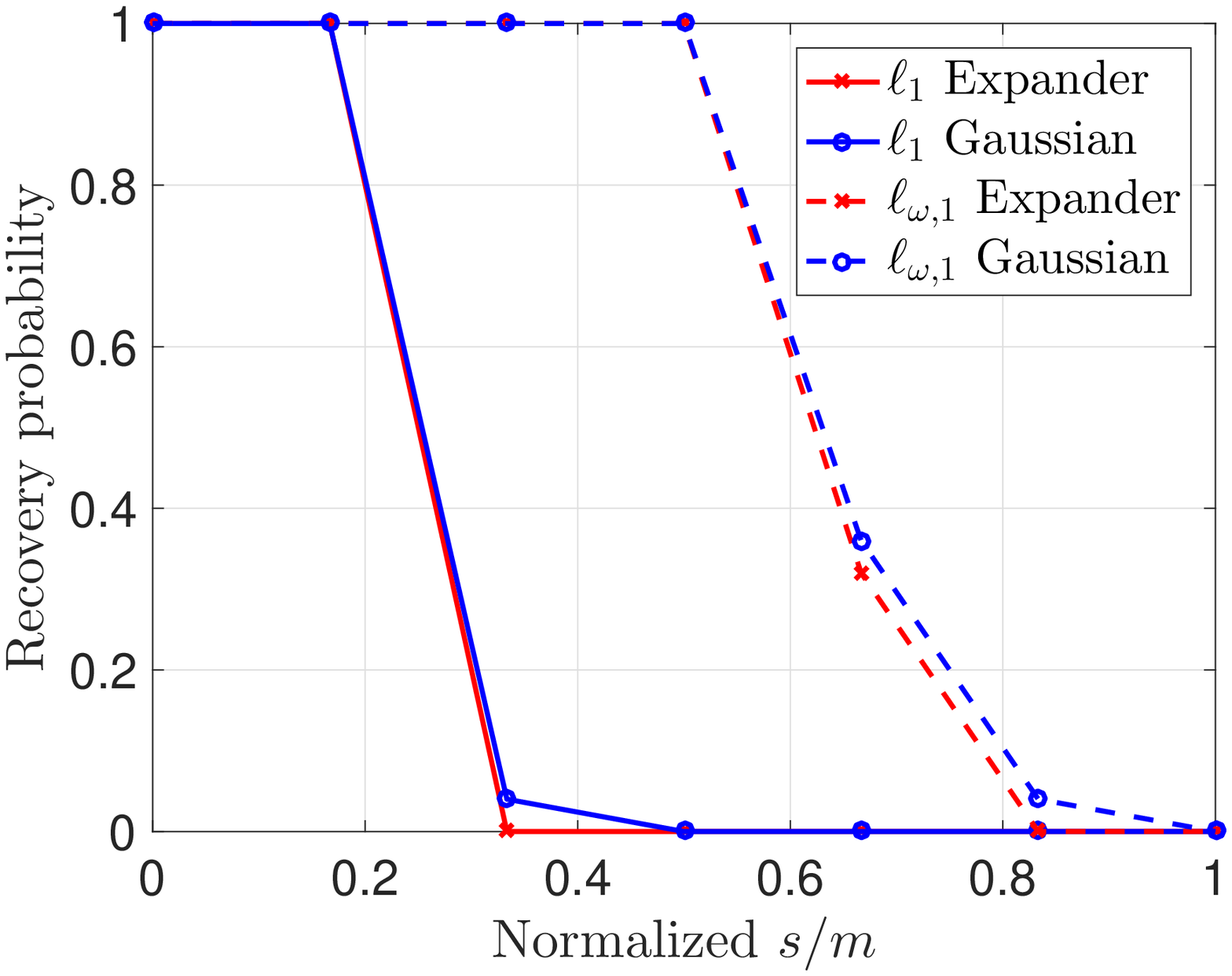} 
\includegraphics[width=0.45\textwidth]{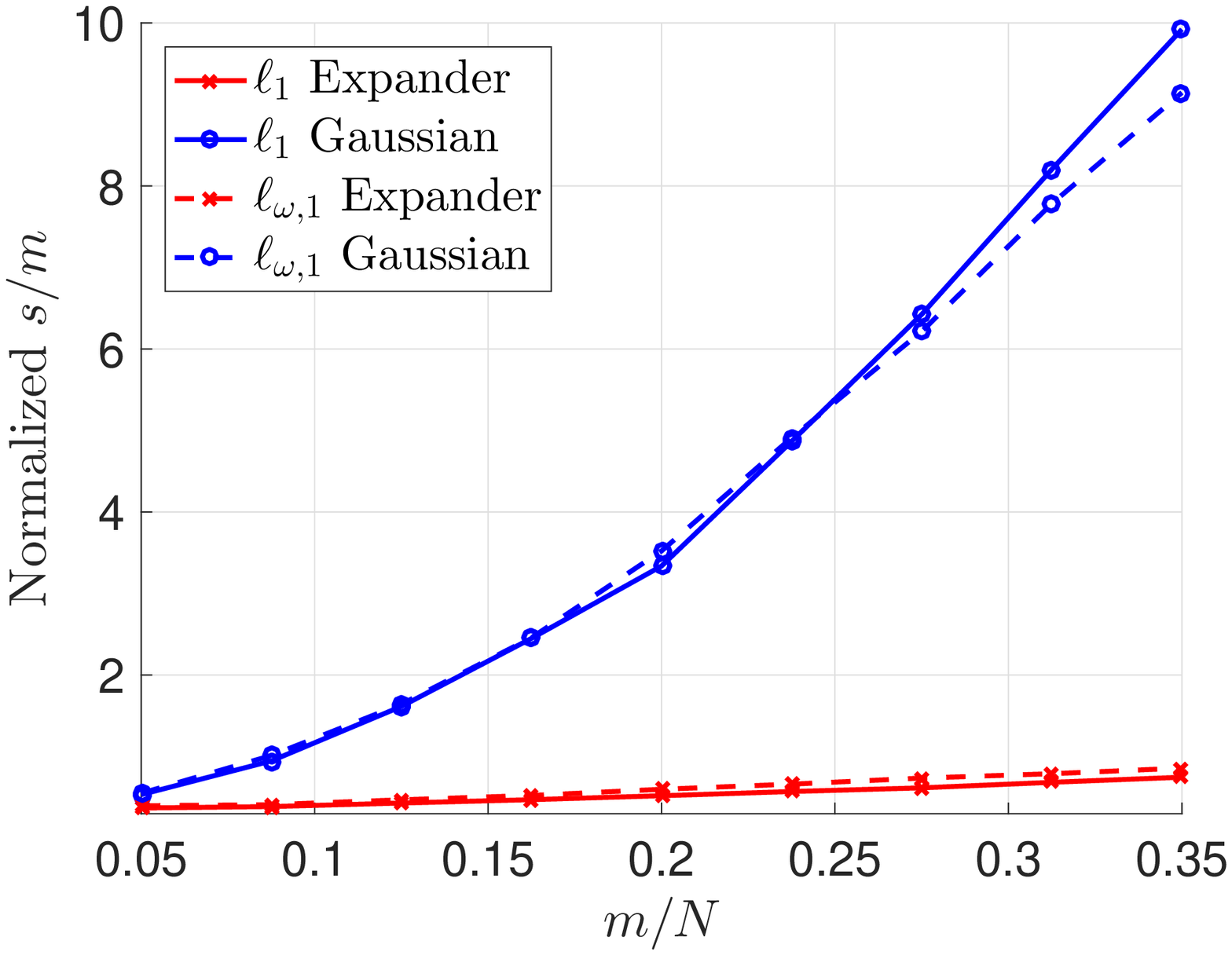} 
\caption{\small {\em Left panel}: Recovery probabilities for a fixed $m/N = 0.1625$ and varying $s$. {\em Right panel}: Runtime comparisons.}
\label{fig:prob_runtime}
\end{figure}

\item[c)] {\bf Accuracy of reconstructions:}
In Figure \ref{fig:lw1errors} we plot relative approximation errors in the $\ell_{\om,1}$ norm (top panel). The left panel are for a fixed $s/m = 1.25 $ and varying $m$ while in the right panel are for fixed $m/N = 0.1625 $ and varying $s$. In Figure \ref{fig:l2errors} we plot relative approximation errors in the $\ell_2$ norm. Similarly, the left panel are for a fixed $s/m = 1.25 $ and varying $m$ while in the right panel are for fixed $m/N = 0.1625 $ and varying $s$. In both Figures \ref{fig:lw1errors} and \ref{fig:l2errors} we see that weighted $\ell_1$ minimization converges faster with smaller number of measurements than unweighted $\ell_1$ minimization; but also we see that Gaussian sensing matrices have smaller approximation errors than the expanders. 
\begin{figure}[h!]
\centering
\includegraphics[width=0.45\textwidth]{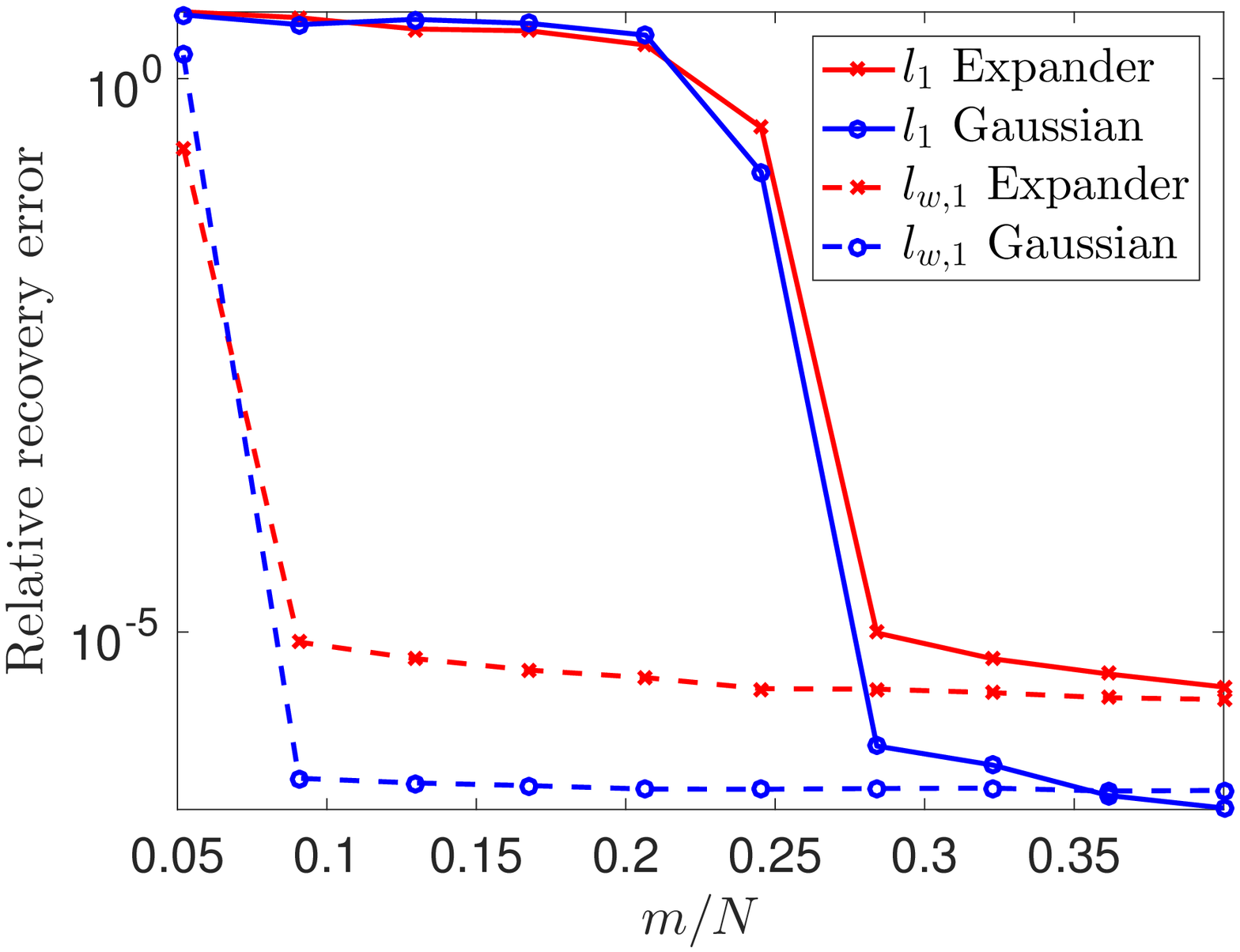} 
\includegraphics[width=0.45\textwidth]{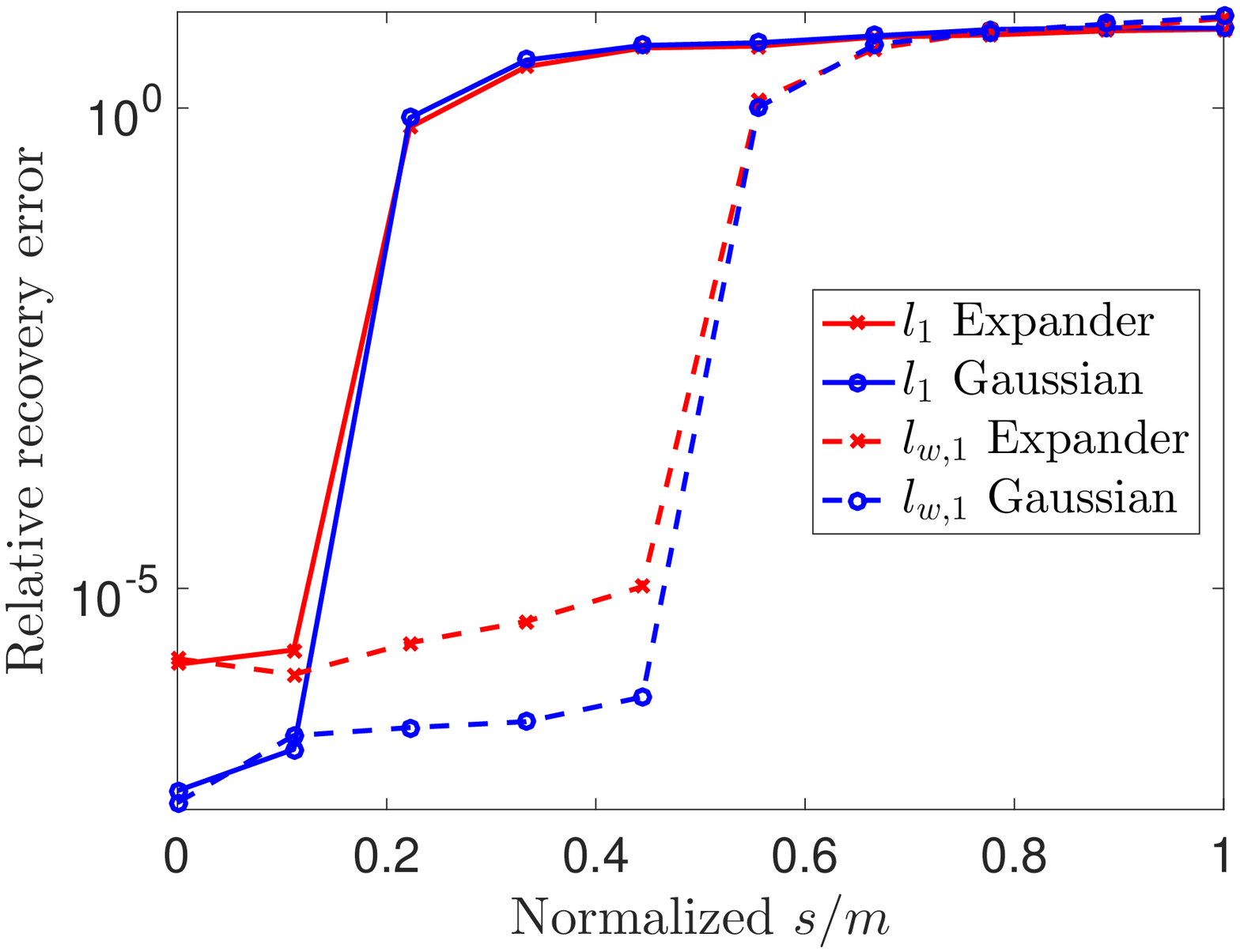} 
\caption{\small {\em Left panel}: Relative errors for a fixed $s/m = 1.25 $ and varying $m$. {\em Right panel}: Relative errors for a fixed $m/N = 0.1625$ and varying $s$.}
\label{fig:lw1errors}
\end{figure}

\begin{figure}[h!]
\centering
\includegraphics[width=0.45\textwidth]{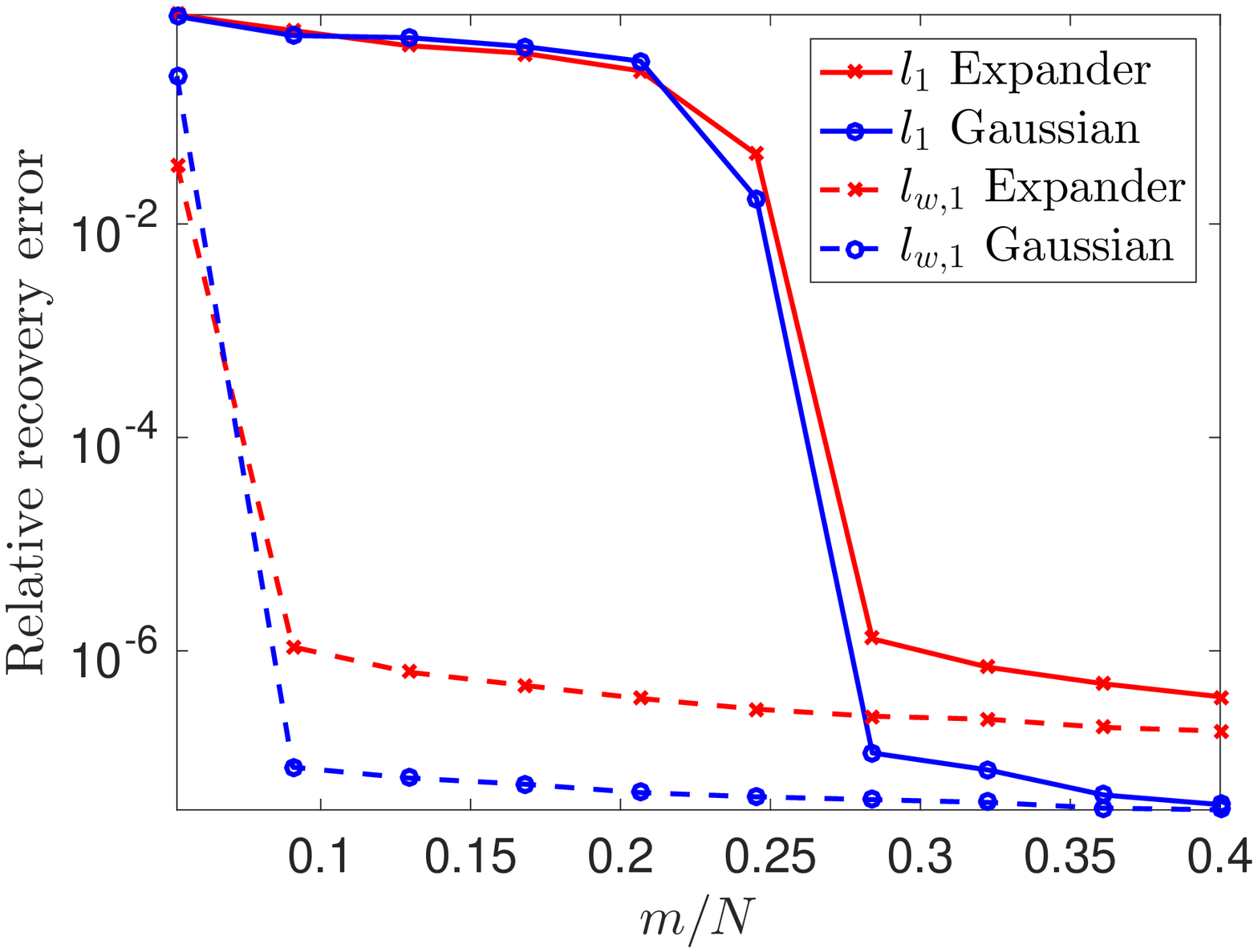} 
\includegraphics[width=0.45\textwidth]{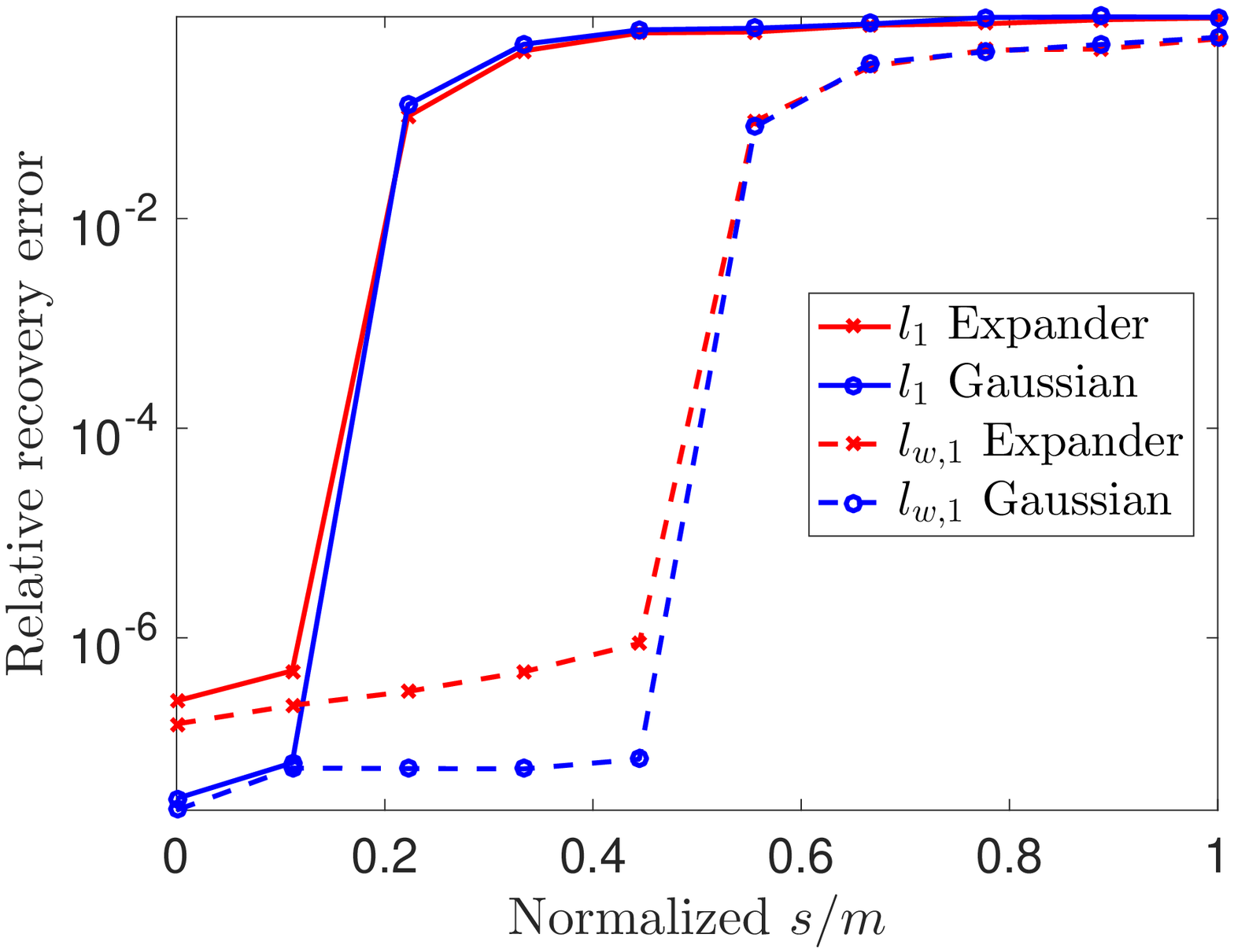} 
\caption{\small {\em Left panel}: Relative errors for a fixed $s/m = 1.25 $ and varying $m$. {\em Right panel}: Relative errors for a fixed $m/N = 0.1625$ and varying $s$.}
\label{fig:l2errors}
\end{figure}
\end{itemize}

\vspace{-0.7cm}
\section{Conclusion} \label{sec:concln}
We give the first rigorous error guarantees for weighted $\ell_1$ minimization with sparse measurement matrices and weighted sparse signals. The matrices are computationally efficient considering their fast application and low storage and generation complexities. The derivation of these error guarantees uses the weighted robust null space property proposed for the more general setting of weighted sparse approximations.  We also derived sampling rates for weighted sparse recovery using these matrices. These sampling bounds are linear in $s$ and can be smaller than sampling rates for standard sparse recovery depending on the choice of weights.  Finally, we demonstrate experimentally the validity of our theoretical results. Moreover, the experimental results show the computational advantage of expander matrices over Gaussian matrices.

\section*{Acknowledgements} 
The author is partially supported by Rachel Ward's NSF CAREER Grant, award \#1255631. The author would like to thank Rachel Ward, for reading the paper and giving valuable feedback that improved the manuscript. 

\vspace{+5mm}
\section{Appendix}

Here we prove the bound \eqref{eqn:numtheoreticbnd} by restating it as a lemma which we then prove.
\begin{lemma}
\label{lem:numtheoreticbnd}
Let $\support$ be a weighted $s$-sparse set of cardinality $k$ and $\om_i = i^{\a/2}$. Then
\begin{equation}
\label{eqn:numtheoreticbndlem}
\sum_{i\in\support} \om_i^2 = \sum_{i\in\support} i^{\alpha} \leq (k+1)^{1+\alpha}\,.
\end{equation}
\end{lemma}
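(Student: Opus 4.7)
The plan is to reduce the claim to a bound on the power sum $\sum_{i=1}^k i^\alpha$ and then dispatch that sum by an elementary integral comparison. The reduction rests on the rearrangement observation that among all subsets of $\mathbb{Z}_+$ of cardinality $k$, the functional $\support \mapsto \sum_{i\in\support} i^\alpha$ is minimized at $\{1,2,\ldots,k\}$ when $\alpha \geq 0$, because $(i^\alpha)$ is nondecreasing. Under the ordering convention already adopted in Section~\ref{sec:sample} (indices relabelled so that the smallest weights $\om_i$ sit on the support of the signal), this minimizer is exactly the $\support$ appearing in the lemma, so it suffices to estimate $\sum_{i=1}^k i^\alpha$.

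For the integral step, since $x\mapsto x^\alpha$ is nondecreasing on $[0,\infty)$ for $\alpha \geq 0$, the pointwise bound $i^\alpha \leq \int_i^{i+1} x^\alpha\,dx$ holds, and telescoping gives
\[
\sum_{i=1}^k i^\alpha \;\leq\; \int_1^{k+1} x^\alpha\,dx \;=\; \frac{(k+1)^{1+\alpha}-1}{1+\alpha} \;\leq\; (k+1)^{1+\alpha},
\]
where the last inequality uses $1+\alpha \geq 1$. Combining with the rearrangement reduction yields the stated bound $\sum_{i\in\support}\om_i^2 \leq (k+1)^{1+\alpha}$.

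The main obstacle is interpretational rather than computational. Read literally for a completely arbitrary cardinality-$k$ subset $\support \subseteq [N]$, the stated bound fails: pushing $\support$ toward $\{N-k+1,\ldots,N\}$ makes $\sum_{i\in\support} i^\alpha$ as large as one likes, far exceeding $(k+1)^{1+\alpha}$. The lemma only becomes true after identifying $\support$ with the weighted-cardinality minimizer of its cardinality class, which is precisely the relabelling in force throughout Section~\ref{sec:sample} and is also the quantity that governs the sample-complexity statement in Theorem~\ref{thm:sampcomp_new}. My reading of the role of the Shekatkar reference is to formalize this rearrangement step — that the minimum of $\sum_{i\in\support} i^\alpha$ over cardinality-$k$ subsets of $\mathbb{Z}_+$ is attained on the initial segment $\{1,\ldots,k\}$ — after which the $(k+1)^{1+\alpha}$ upper bound is the textbook integral estimate above.
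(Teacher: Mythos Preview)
Your argument is correct, and in fact cleaner and more general than the paper's. Both you and the paper implicitly reduce to the case $\support=\{1,\ldots,k\}$ via the ordering convention of Section~\ref{sec:sample}; your observation that the lemma is false for an arbitrary cardinality-$k$ subset is well taken and applies equally to the paper's statement. The difference is in how the power sum $\sum_{i=1}^{k} i^{\alpha}$ is handled. You use the one-line integral comparison $i^{\alpha}\le\int_{i}^{i+1}x^{\alpha}\,dx$, which works for every $\alpha\ge 0$. The paper instead invokes Shekatkar's identity (Lemma~\ref{lem:numhteory}), an exact formula
\[
\sum_{i=1}^{k} i^{1/r}=\frac{r}{r+1}(k+1)^{(1+r)/r}-\tfrac{1}{2}(k+1)^{1/r}-\phi_{k}(r),\qquad r\ge 1,
\]
and then discards the two negative terms. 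So your guess about the role of the Shekatkar reference is off: it is not a rearrangement statement but precisely the power-sum estimate you replaced by the integral bound. A side effect is that the paper's route, because it writes $\alpha=1/r$ with $r\ge 1$, literally covers only $\alpha\in(0,1]$, whereas your integral comparison delivers the full range $\alpha\ge 0$ advertised in Section~\ref{sec:disc}. In that sense your elementary argument is strictly preferable here.
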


\begin{proof}
The proof uses the following number theoretic results due to \cite{shekatkar2012sum}. We express $\a = 1/r$ where $r>1$ is a real number and state the results as a lemma.

\begin{lemma}
\label{lem:numhteory}
Let $r$ be a real number with $r\geq 1$ and $k$ be a positive integer. Then
\begin{equation}
\label{eqn:numtheory}
\sum_{i=1}^k i^{1/r} = \frac{r}{r+1} (k+1)^{\frac{1+r}{r}} - \frac{1}{2}(k+1)^{\frac{1}{r}} - \phi_k(r)\,,
\end{equation}
where $\phi_k$ is a function of $r$ with $k$ as a parameter. This function is bounded between $0$ and $1/2$.
\end{lemma}

\pagebreak
Using this results without proof (the interested reader is referred to \cite{shekatkar2012sum} for the proof) we have
\begin{align}
\label{eqn:kbound1}
\sum_{i\in\support} i^{\alpha} = \sum_{i\in\support} i^{1/r} & \leq \frac{r}{r+1} (k+1)^{\frac{1+r}{r}} - \frac{1}{2}(k+1)^{\frac{1}{r}} - \phi_k(r)\\
& \leq (k+1)^{1+\frac{1}{r}} - \frac{1}{2}(k+1)^{\frac{1}{r}} \\
& \leq \left(k+ \frac{1}{2}\right)(k+1)^{\frac{1}{r}} \\
& \leq (k+1)^{1+ \frac{1}{r}} = (k+1)^{1+ \a}\,,
\end{align}
as required, concluding the proof.
\end{proof}

\small
\bibliographystyle{plain}
\bibliography{bbtex}

\end{document}